\theoremstyle{change}%
\newtheorem{definition}{Definition:}[section]%
\newtheorem{proposition}[definition]{Proposition:}%
\newtheorem{theorem}[definition]{Theorem:}%
\newtheorem{lemma}[definition]{Lemma:}%
{\theorembodyfont{\rmfamily}\newtheorem{remark}[definition]{Remark:}}%
\newenvironment{proof}
{{\bf Proof:}}
{\qquad \hspace*{\fill} $\Box$}%
\newcommand{\fa}{\mathfrak{a}}%
\newcommand{\fg}{\mathfrak{g}}%
\newcommand{\fk}{\mathfrak{k}}%
\newcommand{\fn}{\mathfrak{n}}%
\newcommand{\fp}{\mathfrak{p}}%
\newcommand{\fs}{\mathfrak{s}}%
\newcommand{\fz}{\mathfrak{z}}%
\newcommand{\Ad}{\operatorname{Ad}}%
\newcommand{\ad}{\operatorname{ad}}%
\newcommand{\tr}{\operatorname{tr}}%
\newcommand{\inner}{\operatorname{int}}%
\newcommand{\cl}{\operatorname{cl}}%
\newcommand{\rme}{\mathrm{e}}%
\newcommand{\AC}{\mathcal{A}}%
\newcommand{\CC}{\mathcal{C}}%
\newcommand{\DC}{\mathcal{D}}%
\newcommand{\LC}{\mathcal{L}}%
\newcommand{\OC}{\mathcal{O}}%
\newcommand{\UC}{\mathcal{U}}%
\newcommand{\XC}{\mathcal{X}}%
\newcommand{\R}{\mathbb{R}}%
\newcommand{\Z}{\mathbb{Z}}%
\begin{document}

\title{Control sets of linear systems on semi-simple Lie groups}
\author{V\'{\i}ctor Ayala%
\thanks{
Supported by Proyecto Fondecyt n$%
{{}^\circ}%
$ 1150292. Conicyt, Chile.} \\
Instituto de Alta Investigaci\'{o}n\\
Universidad de Tarapac\'{a}, Arica, Chile and\\
Departamento de Matem\'{a}ticas\\
Universidad Cat\'{o}lica del Norte, Antofagasta, Chile
 \and Adriano Da Silva\thanks{ Supported by Fapesp grant $n^{o}$ 2016/11135-2 and 2018/10696-6} \\
Instituto de Matem\'{a}tica\\
Universidade Estadual de Campinas, Brazil
\and Philippe Jouan \\ 
Laboratoire de Math\'{e}matiques Rapha\"{e}l Salem\\
CNRS UMR 6085\\
Universit\'{e} de Rouen, France
\and Guilherme Zsigmond\thanks{
Supported by Capes grant BEX 1041-14-2} \\
Departamento de Matem\'{a}ticas\\
Universidad Cat\'{o}lica del Norte, Antofagasta, Chile and\\
Laboratoire de Math\'{e}matiques Rapha\"{e}l Salem\\
CNRS UMR 6085\\
Universit\'{e} de Rouen, France
}
\date{\today }
\maketitle

\begin{abstract}
In this paper we study the main properties of control sets with nonempty interior of linear control systems on semisimple Lie groups. We show that, unlike the solvable case, linear control systems on semisimple Lie groups may have more than one control set with nonempty interior and that they are contained in right translations of the one around the identity.
\end{abstract}

\section{Introduction}

	Linear control systems on Lie groups appear as a natural extension of linear control systems on Euclidean spaces. Several works addressing the main issues in control theory for such systems, such as controllability, observability and optimization appeared over the years. In \cite{JPh1} P.	Jouan showed that such generalization is also important for the classification of general control systems on abstract connected manifolds. 

	On the other hand it is meaningful to deal with restricted inputs because that corresponds to the practical case. However many systems are not controllable for restricted inputs. It is therefore natural to study the maximal regions of controllability, i.e. the control sets (see Section 2.1 for the definition). In the paper at hand we analyze the control sets of linear control systems on semisimple Lie groups. In \cite{GzDaAv}, several topological properties of such sets were proven in the solvable case. By using the close relationship between the dynamics of the drift and the behaviour of the control system (see \cite{DSAy} and \cite{DS}) the authors were able to prove boundedness and uniqueness of the control sets. The richness of the geometry of semisimple Lie groups modifies completely the picture. Indeed more than one control set with nonempty interior may exist. They are contained in right translations of the existent control set around the identity. Moreover, the existence of an {\it invariant control set} implies global controllability, showing how semisimplicity strongly influences the behaviour of the control system.

The paper is structured as follows: Section 2 introduces the main definitions and principal properties concerning control systems, control sets, linear vector fields and linear control systems on Lie groups. Since our work is devoted to semisimple Lie groups, we also provide in Section 2 a small subsection about semisimple theory in order to make the paper self-contained. Section 3 contains the main results concerning control sets with nonempty interior of a linear control system on a connected semisimple Lie group. We show that all the possible control sets with nonempty interior of the system are contained in the right translations of the control set around the identity. The particular case where the drift of the system has trivial nilpotent part, these right translations are precisely the control sets of the system. In this section we also show that for linear control systems on semisimple Lie groups, the only possible invariant control set is the whole group. Section 4 is devoted to illustrating the paper with an example in $\mathrm{Sl}(2)$.

\bigskip

{\bf Notations:} Let $G$ be a connected Lie group. We denote by $e$ the identity element of $G$. For any element $g\in G$, the maps $L_g$ and $R_g$ stand for the left and right translations in $G$, respectively. By $C_g=L_g\circ R_{g^{-1}}=R_{g^{-1}}\circ L_g$ we denote the conjugation of $G$. By $\mathrm{Aut}(G)$ we denote the group of automorphisms of $G$. If $(\varphi_t)_{t\in\R}\subset\mathrm{Aut}(G)$ is a 1-parameter subgroup, its orbit from $g$ is the subset $\OC(g, \varphi)=\{\varphi_t(g), \;t\in\R\}$. We say that a subset $B\subset G$ is $\varphi$-invariant if $\varphi_t(B)\subset B$ for any $t\in\R$.

\section{Preliminaries}

	\subsection{Control systems}

	Let $M$ be a $d$-dimensional smooth manifold. A {\it control system} on $M$
	is a family of ordinary differential equations
	\begin{equation}
	\label{controlsystem}
	\dot{x}(t)=f(x(t), u(t)), \; \; \;u\in \mathcal{U},
	\end{equation}
	where $f: M\times \mathbb{R}^{m}\rightarrow TM$ is a smooth map
	and 
	$$\mathcal{U}:=\{u:\R\rightarrow\R^m; \;u\;\mbox{ is measurable with }\;u(t)\in \Omega\;\mbox{ a.e.}\},$$
	is the set of the {\it admissible control functions}, with $\Omega$ a bounded subset of $\mathbb{R}^{m}$ such that $0\in\inner\Omega$. For any $x\in M$ and
	$u\in \mathcal{U}$ we denote by $\phi(t, x, u)$ the unique solution of
	(\ref{controlsystem}) with initial value $x=\phi(0, x, u)$. We use $\phi_{t, u}$ to denote the diffeomorphism $x\in M\mapsto \phi(t, x, u)\in M$. 	
	Given $u_1, u_2\in\UC$ and $t_1, t_2>0$ we have that 
	$$\phi(t_1, \phi(t_2, x, u_2), u_1)=\phi(t_1+t_2, x, u)$$
	where $u=u_1*u_2\in\UC$ is the {\it concatenation } of $u_1$ and $u_2$ define by
	$$u(t)=\left\{ \begin{array}{ll}
	u_1(t), & \;\;t\in [0, t_1]\\
	u_2(t-t_1), & \;\;t\in (t_1, t_1+t_2]
	\end{array}\right.$$

	The set of points {\it reachable from $x$ at time exactly} $\tau>0$, the set of
	points {\it reachable from $x$ up to time} $\tau>0$ and the {\it reachable set from} $x$ are respectively denoted by
	\[
	\AC_{\tau}(x):=\{\varphi(\tau, x, u), \; \;u\in \mathcal{U}\}, \;\;\; \AC_{\leq\tau}(x):=\bigcup
	_{t\in [0, \tau]}\AC_{t}(x)\; \; \; \mbox{ and }\; \; \; \AC(x):=\bigcup
	_{t>0}\AC_{t}(x).
	\]
	By $\mathcal{A}^*_{\tau}(x)$, $\mathcal{A}^*_{\leq \tau}(x)$ and $\mathcal{A}^*(x)$ we denote the
	corresponding sets for the time-reversed system. We say that the system
	(\ref{controlsystem}) is {\it locally accessible from} $x$ if $\mathrm{int}\AC_{\leq \tau}(x)$ and $\mathrm{int}\AC^*_{\leq \tau}(x)$ are nonempty for all $\tau>0$. The system is said to be {\it locally accessible} if it is locally accessible from any $x\in M$. A sufficient
	condition for local accessibility is the Lie algebra rank condition (LARC).
	It is satisfied if the Lie algebra $\mathcal{L}$ generated by the vector
	fields $x\in M\mapsto f_{u}(x):=f(x, u)$, for $u\in \Omega$, satisfies
	$\mathcal{L}(x)=T_{x}M$ for all $x\in M$.
	
	A subset $D\subset M$ is a {\it control set} of \ref{controlsystem} if it is
	maximal w.r.t. set inclusion with the following properties: 
	\begin{enumerate}
		\item[(i)] $D$ is {\it controlled invariant}, i.e., for each $x\in D$ there
		is $u\in \mathcal{U}$ with $\phi(\mathbb{R}_{+},x,u) \subset D$.
		
		\item[(ii)] {\it Approximate controllability} holds on $D$, i.e., $D
		\subset \operatorname{cl}\AC(x)$ for all $x\in D$.
	\end{enumerate}
	
	Following \cite{FCWK}, Proposition 3.2.4., any subset $D$ of $M$ with nonempty
	interior that is maximal with property (ii) in the above definition is a
	control set.
	
	The next result (see Lemma 3.2.13 of \cite{FCWK}) states the main properties of control sets with nonempty interior.
	
	\begin{lemma}
		\label{prop}
		Let $D$ be a control set of (\ref{controlsystem}) with nonempty interior. It holds:
		\begin{itemize}
			\item[(i)] If the system is locally accessible from all $x\in\cl D$, then $D$ is connected and $\cl\inner D=\cl D$;
			\item[(ii)] If $y\in\inner D$ is locally accessible, then $y\in\AC(x)$ for all $x\in D$;
			\item[(iii)] If the system is locally accessible from all $y\in\inner D$, then $\inner D\subset \AC(x)$ for all $x\in D$ and for every $y\in\inner D$ one has
			$$D=\cl\AC(y)\cap\AC^*(y).$$
			In particular, exact controllability holds on $\inner D$.
		\end{itemize}
	\end{lemma}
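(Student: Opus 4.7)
The plan is to derive all three assertions from a single idea: local accessibility guarantees nonempty interior for forward and backward reachable sets, and the approximate controllability $D\subset\cl\AC(x)$ built into the definition of a control set can be upgraded to exact reachability $y\in\AC(x)$ by intersecting the dense set $\AC(x)$ with an open set in which every point flows to $y$ in positive time; concatenation of controls then produces an admissible trajectory from $x$ to $y$.

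I would begin with (ii), the workhorse of the lemma. Fix $x\in D$ and let $y\in\inner D$ be locally accessible. The first step is a normal-accessibility refinement showing that, under local accessibility at $y$, for some $\tau>0$ the interior of $\AC^*_{\leq\tau}(y)$ is not only nonempty but accumulates at $y$; this is the technical heart of the argument and the genuine use of local accessibility, as opposed to the weaker statement that some reachable set has nonempty interior. Pick an open set $U\subset\AC^*_{\leq\tau}(y)$ with $y\in\cl U$; since $y\in\cl\AC(x)$ by approximate controllability, one can choose $z\in U\cap\AC(x)$, and concatenating the control steering $x$ to $z$ with the one steering $z$ to $y$ yields $y\in\AC(x)$.

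For (i), connectedness follows from fixing $x_{0}\in D$ and observing that $\{x_{0}\}\cup\AC(x_{0})$ is connected (every $\AC_{\leq t}(x_{0})$ is path-connected through $x_{0}$), while $D\subset\cl(\{x_{0}\}\cup\AC(x_{0}))$ by approximate controllability. The equality $\cl\inner D=\cl D$ reduces to approximating each $y\in D$ by points of $\inner D$: the normal-accessibility step already used in (ii) produces interior points of $\AC_{\leq\tau}(y)$ arbitrarily close to $y$, and any such point is forward-reachable from every $x\in D$ by (ii) and backward-reachable to $y$ by construction, hence lies in $D$ together with a whole neighborhood. For (iii), the inclusion $\inner D\subset\AC(x)$ is precisely (ii); the identity $D=\cl\AC(y)\cap\AC^*(y)$ combines approximate controllability ($D\subset\cl\AC(y)$) and (ii) applied to the reversed system ($D\subset\AC^*(y)$) with the maximality property of control sets recorded after Proposition 3.2.4 of \cite{FCWK}, applied to the right-hand side, which is controlled invariant and approximately controllable since any two of its points can be linked through $y$. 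The main obstacle, as indicated above, is the normal-accessibility refinement in (ii); once it is secured the remaining arguments are routine manipulations of concatenation and maximality.
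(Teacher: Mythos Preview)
The paper does not actually prove this lemma: immediately before the statement it writes ``The next result (see Lemma 3.2.13 of \cite{FCWK}) states the main properties of control sets with nonempty interior,'' and no proof is given afterwards. So there is nothing in the paper to compare your argument against; the result is simply quoted from Colonius--Kliemann.

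That said, a remark on your sketch. Your proof of (ii) is the standard one and is fine; note, though, that you do not need the full ``normal accessibility'' refinement (interior points accumulating at $y$). Since $\Omega$ is bounded and $f$ is smooth, for small $\tau$ the whole set $\AC^*_{\leq\tau}(y)$ lies in any prescribed neighbourhood of $y$, and local accessibility then gives $\emptyset\neq\inner\AC^*_{\leq\tau}(y)\subset\inner D$; intersecting with the dense set $\AC(x)$ and concatenating finishes the job. Your argument for connectedness in (i), however, has a gap: from $D\subset\cl(\{x_0\}\cup\AC(x_0))$ you only get that $D$ sits inside a connected set, and subsets of connected sets need not be connected. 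One standard repair is to first establish $\cl\inner D=\cl D$ and exact controllability on $\inner D$, and then argue that for any $x\in D$ and any $y\in\inner D$ the trajectory realising $y\in\AC(x)$, together with local accessibility along it, can be used to connect $x$ to $\inner D$ within $D$; alternatively one shows that every point of $D$ lies in the closure of the (path-)connected set $\inner D$ inside $D$. Your treatment of (iii) is correct.
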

	
We say that a control set $D$ is {\it positively-invariant} (resp. {\it negatively-invariant}) it $\phi_{t, u}(D)\subset D$ for any $u\in\UC$ and $t>0$ (resp. $t<0$).


	\subsection{Semisimple theory}
	
	Standard references for the theory of semisimple Lie groups are Duistermat-Kolk-Varadarajan \cite{DKV}, Helgason \cite{Hel}, Knapp \cite{Knapp} and Warner \cite{War}. In the sequel, we only provide a brief review of the concepts used in this paper.%
	
	Let $G$ be a connected semisimple non-compact Lie group $G$ with finite center and Lie algebra $\fg$. We choose a Cartan involution $\zeta:\fg\rightarrow\fg$ and denote by $B_{\zeta}(X,Y) = -C(X,\zeta(Y))$ the associated inner product, where $C(X,Y) = \tr(\ad(X)\ad(Y))$ is the Cartan-Killing form. If $\fk$ and $\fs$ stand, respectively, for the eigenspaces of $\zeta$ associated with $1$ and $-1$, the {\it Cartan decompositions} of $\fg$ and $G$ are given, respectively, by%
	\begin{equation*}
	\fg = \fk \oplus \fs\;\;\;\;\mbox{ and }\;\;\;\;G = KS, \;\;\;\;\mbox{ where }\;\;\;K = \exp\fk \;\;\;\mbox{ and }\;\;\;S = \exp\fs.%
	\end{equation*}
	Fix a maximal abelian subspace $\fa\subset\fs$ and denote by $\Pi$ the set of roots for this choice. If $\fn:= \sum_{\alpha\in\Pi^{+}}\fg_{\alpha}$, where $\Pi^+$ is the set of positive roots and%
	\begin{equation*}
	\fg_{\alpha} = \left\{ X\in\fg\ :\ \ad(H)X = \alpha(H)X,\ \forall H \in \fa \right\}%
	\end{equation*}
	is the root space associated with $\alpha\in\Pi$, the {\it Iwasawa decompositions} of $\fg$ and $G$ are given, respectively, by%
	\begin{equation*}
	\fg = \fk \oplus \fa \oplus \fn \;\;\;\;\mbox{ and }\;\;\;\; G=KAN, \;\;\;\;\mbox{ where }\;\;N=\exp\fn \mbox{ and }A=\exp\fa.%
	\end{equation*}
	
	Let $\fa^+=\{H\in\fa; \;\alpha(H)>0, \;\alpha\in\Pi^+\}$ be the positive {\it Weyl chamber} associated with the above choices and consider $H\in\cl\fa^+$. The eigenspaces of $\ad(H)$ in $\fg$ are given by $\fg_{\alpha}$, $\alpha\in\Pi$ and $\fg_0=\ker\ad(H)$. The centralizer of $H$ in $\fg$ is given by
	\begin{equation*}
	\fz_H := \sum_{\alpha\in\Pi\cup\{0\}:\ \alpha(H)=0}\fg_{\alpha}%
	\end{equation*}
	and the centralizer in $\fk$ by $\fk_H := \fk \cap \fz_H$. They are, respectively, the Lie algebra of the centralizer of $H$ in $G$, $Z_H:=\{g\in G: \Ad(g)H=H\}$, and in $K$, $K_H=K\cap Z_H$. Since $G$ has finite center, the {\it centralizer} $M$ of $\fa$ in $K$ is a compact subgroup of $G$. This fact, together with the equality $Z_H=M(Z_H)_0$ implies that $Z_H$ has a finite number of connected components. The finite subgroup $\Gamma=Z_H/(Z_H)_0$ parametrizes the connected components of $Z_H$ and hence, $(Z_H)_{\gamma}$ will stand for the connected component of $Z_H$ related with $\gamma\in\Gamma$. 
	
	The negative and positive nilpotent subalgebras of type $H$ are given by%
	\begin{equation*}
	\fn_H := \sum_{\alpha\in\Pi :\ \alpha(H)>0}\fg_{\alpha} \mbox{\quad and\quad} \fn^-_H := \sum_{\alpha\in\Pi :\ \alpha(H)<0}\fg_{\alpha}.%
	\end{equation*}
		The parabolic subalgebra and the negative parabolic subalgebra of type $H$ are given, respectively, by 
		\begin{equation*}
			\fp_H := \sum_{\alpha\in\Pi\cup\{0\}:\ \alpha(H)\geq 0}\fg_{\alpha}\mbox{\quad and\quad}\fp^-_H := \sum_{\alpha\in \Pi\cup\{0\}:\ \alpha(H) \leq 0}\fg_{\alpha}.
		\end{equation*}
	At the group level, $N_H = \exp(\fn_H)$ and $N^-_H = \exp(\fn^-_H)$ stand for the connected nilpotent Lie subgroup associated with $\fn_H$ and $\fn_H^-$, respectively. The parabolic subgroups $P_H$ and $P_H^-$ are, respectively, the normalizer of $\fp_H$ and $\fp_H^-$ in $G$. It holds that $N_H$ is a normal subgroup of $P_H$ and the same is true for $N_H^-$ and $P_H^-$. In particular, it holds that
	$$P_H=Z_HN_H=N_HZ_H\;\;\;\mbox{ and }\;\;\;P^-_H=Z_HN_H^-=N_H^-Z_H.$$
	Moreover, the set 
	\begin{equation}
	\label{open}
	U_H:=P_HN^-_H=N_HZ_HN^-_H=N_HP^-_H
	\end{equation}
	is an open and dense subset of $G$.

\subsection{Linear vector fields on semisimple Lie groups}

	A vector field $\mathcal{X}$ on a connected Lie group $G$ is said to be {\it linear} if its flow 
	$(\varphi_t)_{t\in \R}$ is a $1$-parameter subgroup of $\mathrm{Aut}(G)$. Associated to any linear vector field $\mathcal{X}$ there is a derivation $\mathcal{D}$ of $\mathfrak{g}$ defined by the formula
	$$
	\mathcal{D}Y=-[\mathcal{X},Y](e),\mbox{ for all }Y\in \mathfrak{g}. 
	$$
	The relation between $\varphi_t$ and $\mathcal{D}$ is given by the
	formula 
	\begin{equation}
	(d\varphi_{t})_{e}=\mathrm{e}^{t\mathcal{D}}\; \; \; \mbox{ for all }\; \;
	\;t\in \mathbb{R}. \label{derivativeonorigin}
	\end{equation}%
	In particular, it holds that
	\begin{equation*}
	\varphi _{t}(\exp Y)=\exp (\mathrm{e}^{t\mathcal{D}}Y),\mbox{ for all }t\in 
	\mathbb{R},Y\in \mathfrak{g}.
	\end{equation*}
	
	Let $G$ be semisimple and consider $\XC$ to be a linear vector field on $G$. If $\DC$ stands for the derivation associated with $\XC$, the fact that $\fg$ is a semisimple Lie algebra implies that $\DC$ is inner, that is, there exists $X\in \fg$ such that $\DC=-\ad(X)$. 
	
	By equation (\ref{derivativeonorigin}) we get that
	$$\varphi_t(\exp Y)=\exp(\rme^{t\DC}Y)=\exp(\rme^{-t\ad(X)}Y)=C_{\rme^{tX}}(\exp Y)$$
	and since $G$ is connected, we conclude that $\varphi_t=C_{\rme^{tX}}$, where the minus sign on the above formula is connected with the choice of right-invariant vector fields.
	
	Following \cite{Hel} (Chapter 9, Lemma 3.1) the {\it Jordan} decomposition of an element $X\in\fg$ is the commuting decomposition  $X=E+H+N$ where $ H\in\cl(\fa^+)$, $E\in\fk_H$ and $\ad(N)$ is nilpotent. In particular, the Lie subalgebras $\fn_H$, $\fz_H$ and $\fn_H^-$ coincide, respectively, with the sum of the real generalized eigenspaces of $\DC$ associated with the eigenvalues with positive, zero and negative real parts. 
	
	We call the elements $E$, $H$ and $N$ obtained from the Jordan decomposition of $X$ the {\it elliptic, hyperbolic} and {\it nilpotent} parts of $\XC$, respectively. Moreover, the Jordan decomposition of $X$ implies that the flow of $\XC$ is given by the commutative product
	$$\varphi_t=C_{\rme^{tX}}=C_{\rme^{tE}}\circ C_{\rme^{tH}}\circ C_{\rme^{tN}}.$$ 
	A simple calculation shows that $N_H$, $N_H^-$ and $Z_H$ are $\varphi$-invariant. 
	
\subsection{Linear control systems on semisimple Lie groups}

A linear control system on a connected Lie group $G$ is a family of ordinary differential equations of the form
\begin{flalign*}
&&\dot{g}(t)=\XC(g(t))+\sum_{j=1}^mu_j(t)Y^j(g(t)), \; \; \;u=(u_1, \ldots, u_m)\in \UC &&\hspace{-1cm}\left(\Sigma_G\right)
\end{flalign*}
where $\XC$ is a linear vector field, $Y^{j}, \;j=1, \ldots, m$ are right-invariant vector fields and $u(t)\in\Omega$. The solutions of linear control systems are related to the flow of $\XC$ by the formula 
\begin{equation*}
\label{solutionform}
\phi(t, g, u)=\phi(t, e, u)\varphi_t(g)=L_{\phi(t, e, u )}(\varphi_{t}(g)).
\end{equation*}

Let us denote by $\AC_{\tau}, \AC^*_{\tau}, \AC_{\leq \tau}, \AC^*_{\leq \tau}, \mathcal{A}$ and $\AC^*$ the sets $\AC_{\tau}(e), \AC^*_{\tau}(e), \AC_{\leq \tau}(e), \AC^*(e)_{\leq \tau}, \mathcal{A}(e)$ and $\AC^*(e)$, respectively. 

The next proposition states the main properties of the reachable sets of
linear control systems (see for instance \cite{Jouan11}, Proposition 2).

\begin{proposition}
	\label{Prop2}
	With the previous notations it holds:
	
	\begin{itemize}
		\item[1.] $\AC_{\leq\tau}=\AC_{\tau}$;
		
		
		\item[2.] $\mathcal{A}_{\tau }(g)=\mathcal{A}_{\tau
		}\varphi _{\tau }(g);$
		
		\item[3.] $\mathcal{A}_{\tau
			_{1}+\tau _{2}}=\mathcal{A}_{\tau _{1}}\varphi _{\tau _{1}}(\mathcal{A}
		_{\tau _{2}})=\mathcal{A}_{\tau _{2}}\varphi _{\tau _{2}}(\mathcal{A}_{\tau
			_{1}}).$
	\end{itemize}
\end{proposition}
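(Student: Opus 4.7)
The plan is to exploit the fundamental solution formula
$$\phi(t, g, u) = \phi(t, e, u)\,\varphi_t(g),$$
which encodes the fact that $\XC$ generates a flow by automorphisms. I would prove the three items in the order (2), (3), (1), since (3) uses (2) and (1) in turn uses (3).

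Item (2) follows directly from the formula: if $x = \phi(\tau, g, u)$ for some $u \in \UC$, then $x = \phi(\tau, e, u)\cdot\varphi_\tau(g)$ with $\phi(\tau, e, u)\in\AC_\tau$, so $\AC_\tau(g) = \AC_\tau\,\varphi_\tau(g)$. For item (3), any trajectory of length $\tau_1+\tau_2$ issuing from $e$ can be decomposed via concatenation: evolve under some $u_1$ on $[0,\tau_1]$ reaching a point of $\AC_{\tau_1}$, then evolve under some $u_2$ on the remaining interval. Applying item (2) at the intermediate point and ranging over all admissible pairs yields
$$\AC_{\tau_1+\tau_2} \;=\; \AC_{\tau_2}\,\varphi_{\tau_2}(\AC_{\tau_1}),$$
and the other equality comes from swapping the roles of $\tau_1$ and $\tau_2$.

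Item (1) is the most substantive. The key observation is that $0 \in \inner \Omega$, so the zero control $u\equiv 0$ is admissible; with this choice the system reduces to $\dot g = \XC(g)$, whose flow is $\varphi_t\in\mathrm{Aut}(G)$. Since every automorphism fixes $e$, we get $e\in\AC_s$ for every $s\geq 0$. Given $x\in\AC_{\leq\tau}$ with $x\in\AC_s$ for some $s\in[0,\tau]$, applying item (3) with the decomposition $\tau = s + (\tau-s)$ gives
$$\AC_\tau \;=\; \AC_s\,\varphi_s(\AC_{\tau-s}) \;\supset\; \AC_s\cdot\{\varphi_s(e)\} \;=\; \AC_s,$$
hence $x\in\AC_\tau$; the reverse inclusion $\AC_\tau\subset\AC_{\leq\tau}$ is immediate.

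The main obstacle is bookkeeping rather than any deep technical issue: one has to carefully track the side on which $\varphi_t(g)$ appears in the solution formula, which is a right factor precisely because the controlled fields $Y^j$ are right-invariant and $\XC$ acts by conjugation $C_{\rme^{tX}}$. Once that convention is fixed, the displayed identities are short algebraic manipulations, and the nontrivial input reduces to the single fact that the zero control is admissible and fixes the identity.
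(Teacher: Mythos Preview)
Your argument is correct. The paper does not give its own proof of this proposition; it simply records the statement and refers to \cite{Jouan11}, Proposition~2. Your derivation --- item~(2) directly from the solution formula $\phi(t,g,u)=\phi(t,e,u)\varphi_t(g)$, item~(3) by concatenation combined with~(2), and item~(1) from~(3) together with the observation that the admissible zero control keeps $e$ fixed because $\varphi_t\in\mathrm{Aut}(G)$ --- is exactly the standard proof one finds in that reference, so there is nothing to compare.
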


The next result shows that the set $\mathcal{A}$ is invariant by right translations of elements whose $\varphi$-orbits are contained in $\mathcal{A}$ (\cite{DS}, Lemma 3.1).

\begin{lemma}
	\label{pointinvariance} Let $g\in \mathcal{A}$ and assume that $\OC(g, \varphi)\subset \mathcal{A}$. Then $\mathcal{A}\cdot
	g\subset \mathcal{A}$.
\end{lemma}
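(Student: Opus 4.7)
The plan is to exploit the multiplicative structure of the reachable sets given by Proposition~\ref{Prop2}(3), namely $\mathcal{A}_{\tau_1+\tau_2}=\mathcal{A}_{\tau_1}\varphi_{\tau_1}(\mathcal{A}_{\tau_2})$. The hypothesis that the entire $\varphi$-orbit $\OC(g,\varphi)=\{\varphi_t(g):t\in\R\}$ lies in $\mathcal{A}$ is precisely what allows us to pull $g$ back by an arbitrary amount along the drift and still remain in $\mathcal{A}$.

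Concretely, pick any $h\in\mathcal{A}$ and choose $\tau_1>0$ with $h\in\mathcal{A}_{\tau_1}$. The crux is the observation that $\varphi_{-\tau_1}(g)\in\OC(g,\varphi)\subset\mathcal{A}$, so there exists $\tau_2>0$ with $\varphi_{-\tau_1}(g)\in\mathcal{A}_{\tau_2}$. Applying $\varphi_{\tau_1}$ gives $g=\varphi_{\tau_1}(\varphi_{-\tau_1}(g))\in\varphi_{\tau_1}(\mathcal{A}_{\tau_2})$. Invoking Proposition~\ref{Prop2}(3) with these two times yields
\begin{equation*}
h\cdot g\ \in\ \mathcal{A}_{\tau_1}\cdot\varphi_{\tau_1}(\mathcal{A}_{\tau_2})\ =\ \mathcal{A}_{\tau_1+\tau_2}\ \subset\ \mathcal{A},
\end{equation*}
which is what we wanted. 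Since $h\in\mathcal{A}$ was arbitrary, $\mathcal{A}\cdot g\subset\mathcal{A}$.

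There is essentially no obstacle here; the proof is a one-line application of the reachable-set factorization. The only conceptual point worth highlighting is the role of the hypothesis: without the $\varphi$-invariance of the orbit of $g$ inside $\mathcal{A}$, one could only shift $g$ backward by small enough times, and the matching of $\tau_1$ (the cost of reaching $h$) with a corresponding $\tau_2$ (the cost of reaching $\varphi_{-\tau_1}(g)$) would fail.
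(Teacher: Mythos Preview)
Your proof is correct. Note, however, that the paper does not actually supply a proof of this lemma: it is quoted from the literature (\cite{DS}, Lemma~3.1), so there is no in-paper argument to compare against. Your argument---reducing to the factorization $\mathcal{A}_{\tau_1+\tau_2}=\mathcal{A}_{\tau_1}\varphi_{\tau_1}(\mathcal{A}_{\tau_2})$ of Proposition~\ref{Prop2}(3) and using the hypothesis $\OC(g,\varphi)\subset\mathcal{A}$ to place $\varphi_{-\tau_1}(g)$ in some $\mathcal{A}_{\tau_2}$---is exactly the standard route and is precisely what the cited reference does.
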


Let $G$ to be a connected semisimple Lie group with finite center. By using the notations introduced in Section 2.2 for the semisimple case, Theorem 3.9 of \cite{DSAy} gives us a strict relation between the subgroups $P_H$ and $P_H^-$ and the linear control system $\Sigma_G$ as follows.

\begin{theorem}
	\label{generalcase} Let $G$ be a connected semisimple Lie group with finite center. If $\mathcal{A}$ is open, then $(P_H)_0\subset \mathcal{A}$ and $(P^-_H)_0\subset \mathcal{A}^*$.
\end{theorem}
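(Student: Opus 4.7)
The plan is to prove $(P_H)_0 \subset \AC$; the inclusion $(P_H^-)_0 \subset \AC^*$ will then follow by duality, applying the same argument to the time-reversed system (whose drift is $-\XC$, derivation $-\DC$, swapping the roles of $\fn_H$ and $\fn_H^-$). Since $(P_H)_0 = (Z_H)_0 N_H$, I would decompose the proof into three steps: (i) $N_H \subset \AC$; (ii) $(Z_H)_0 \subset \AC$; and (iii) combining via the right-invariance $\AC N_H \subset \AC$.

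First, note that $e \in \AC$: since $0 \in \inner\Omega$ and $\varphi_t(e) = e$, the zero-control trajectory $\phi(t,e,0) = \varphi_t(e)$ stays at $e$, placing $e$ in every $\AC_t$. Because $\AC$ is open, $e$ has an open neighborhood $V \subset \AC$. For $g \in N_H$, $\DC$ has eigenvalues with positive real part on $\fn_H$, so $\varphi_{-t}(g) \to e$ as $t \to \infty$; choosing $T$ large enough that $\varphi_{-T}(g) \in V \subset \AC$, and then applying the zero control for time $T$ (which evolves any initial point by $\varphi_T$ thanks to the solution formula), I would reach $g = \varphi_T(\varphi_{-T}(g))$. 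Concatenation gives $g \in \AC$, so $N_H \subset \AC$. Since $N_H$ is $\varphi$-invariant, Lemma \ref{pointinvariance} then yields $\AC N_H \subset \AC$.

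The hard step is $(Z_H)_0 \subset \AC$, because points in $(Z_H)_0$ are not contracted to $e$: on $\fz_H$ the eigenvalues of $\DC$ are purely imaginary (from the elliptic part $E$) together with a nilpotent contribution from $N$, giving orbits of at most polynomial growth that do not return to the identity. Here I would exploit two ingredients. On the one hand, $\AC$ already contains the open neighborhood $V \cap (Z_H)_0$ of $e$ in the connected group $(Z_H)_0$, which generates $(Z_H)_0$ as a semigroup. On the other hand, $\varphi_t = C_{\rme^{tX}}$ with $X \in \fz_H$ and $\rme^{tX} \in (Z_H)_0$, so $\varphi_t$ restricts to an \emph{inner} automorphism of $(Z_H)_0$ whose orbits stay inside $(Z_H)_0$. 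Combining these with the concatenation identity $\AC_{\tau_1+\tau_2} = \AC_{\tau_1}\varphi_{\tau_1}(\AC_{\tau_2})$ from Proposition \ref{Prop2}, I would bootstrap from the neighborhood of $e$ in $(Z_H)_0$ to all of $(Z_H)_0$. Once $(Z_H)_0 \subset \AC$ is established, $(P_H)_0 = (Z_H)_0 N_H \subset \AC N_H \subset \AC$ closes the argument.

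The main obstacle is precisely this bootstrap: the contraction trick that worked for $N_H$ is unavailable for $(Z_H)_0$, and one must use the inner-automorphism structure of $\varphi|_{Z_H}$ together with the semigroup identities of Proposition \ref{Prop2} to ensure that products of nearby elements of $(Z_H)_0$ do not leak out of $\AC$. The proof for $\AC^*$ is then entirely symmetric: replacing $X$ by $-X$ flips the signs of the eigenvalues of $\DC$, interchanges $N_H$ and $N_H^-$ while leaving $(Z_H)_0$ invariant, and the identical three-step argument yields $(P_H^-)_0 \subset \AC^*$.
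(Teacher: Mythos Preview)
The paper does not prove Theorem~\ref{generalcase}; it is quoted verbatim from Theorem~3.9 of \cite{DSAy}, so there is no in-paper argument to compare against. That said, your outline has a genuine gap.

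Your step~(i), $N_H\subset\AC$ via the contraction $\varphi_{-t}|_{N_H}\to e$ followed by the zero control, is correct and is indeed the easy half. Step~(iii) is also fine once (i) and (ii) hold, using Lemma~\ref{pointinvariance} and the $\varphi$-invariance of $N_H$. The problem is step~(ii): you announce that you would ``bootstrap'' from a neighborhood $V\cap(Z_H)_0$ of $e$ to all of $(Z_H)_0$ using Proposition~\ref{Prop2}, but you supply no mechanism, and the obvious candidates fail. The concatenation identity yields $g_1\varphi_{\tau_1}(g_2)\in\AC_{\tau_1+\tau_2}$, not $g_1g_2\in\AC$; the twist by $\varphi_{\tau_1}$ is precisely what prevents the naive ``a neighborhood of $e$ generates the connected group'' argument from going through inside $\AC$. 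Alternatively, to iterate Lemma~\ref{pointinvariance} you would need $\OC(v,\varphi)\subset\AC$ for every $v$ in your generating neighborhood; but on $(Z_H)_0$ the restricted flow is $\varphi_t=C_{\rme^{tE}}\circ C_{\rme^{tN}}$ (since $C_{\rme^{tH}}$ is the identity on $Z_H$), and when the nilpotent part $N$ is nontrivial the orbit of a point arbitrarily close to $e$ can leave any prescribed neighborhood, so there is no reason it stays inside the open set $\AC$. Nothing in your sketch controls this. The proof in \cite{DSAy} is substantially more involved and exploits the internal structure of the reductive group $(Z_H)_0$; it does not reduce to a generation-from-a-neighborhood argument, and your proposal, as written, leaves $(Z_H)_0\subset\AC$ unproved in the general case.
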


Next we extend the above results relating $\AC$ with the other connected components of $P_H$.

\begin{proposition}
	\label{improved}
	Let $G$ be a connected semisimple Lie group with finite center and assume that $\AC$ is open. For any $\gamma\in\Gamma$  we have that 
	$$(Z_H)_{\gamma}\cap\AC\neq\emptyset\;\;\implies\;\;(P_H)_{\gamma^n}\subset\AC, \;\;\mbox{ for any }\;\;n\in\Z.$$
\end{proposition}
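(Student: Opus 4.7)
The plan is to iteratively apply Lemma~\ref{pointinvariance} starting from the base case provided by Theorem~\ref{generalcase}. The decisive observation is that every connected component of $P_H$ is $\varphi$-invariant: since $P_H$ itself is $\varphi$-invariant and $\varphi$ is a continuous flow of automorphisms fixing $e$, each $\varphi$-orbit inside $P_H$ is a connected subset that cannot jump between components. Consequently, as soon as some component $(P_H)_\eta$ lies entirely in $\AC$, every $h\in(P_H)_\eta$ satisfies $\OC(h,\varphi)\subset(P_H)_\eta\subset\AC$, and Lemma~\ref{pointinvariance} yields $\AC\cdot h\subset\AC$; in short, $\AC\cdot(P_H)_\eta\subset\AC$.

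With this at hand, I would first dispose of the case $n=1$. Fix $g_0\in(Z_H)_\gamma\cap\AC$ and note that $g_0\in(P_H)_\gamma$ since $Z_H\subset P_H$. The preceding observation applied to $\eta=0$, using Theorem~\ref{generalcase}, gives $\AC\cdot(P_H)_0\subset\AC$. Since $(P_H)_0$ is normal in $P_H$, we have the coset identity $(P_H)_\gamma=g_0\cdot(P_H)_0$, hence $(P_H)_\gamma\subset\AC\cdot(P_H)_0\subset\AC$. I would then proceed by induction on $n\geq 1$: assuming $(P_H)_{\gamma^n}\subset\AC$, the key observation again gives $\AC\cdot(P_H)_{\gamma^n}\subset\AC$, and the coset identity $(P_H)_{\gamma^{n+1}}=g_0\cdot(P_H)_{\gamma^n}$ together with $g_0\in\AC$ delivers $(P_H)_{\gamma^{n+1}}\subset\AC$.

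For $n<0$, I would invoke the finiteness of $\Gamma$ recorded in the preliminaries, which follows from $Z_H=M(Z_H)_0$ with $M$ compact. The element $\gamma$ therefore has finite order in $\Gamma$, so each negative power of $\gamma$ coincides with some non-negative power already handled above. The only verifications demanding care are the $\varphi$-invariance of each $(P_H)_\eta$ (a consequence of the $\varphi$-invariance of $P_H$ together with continuity of orbits) and the coset arithmetic propagated through the induction; I foresee no substantial obstacle, the whole argument being a direct bootstrap of Theorem~\ref{generalcase} through repeated application of Lemma~\ref{pointinvariance}.
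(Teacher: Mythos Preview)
Your proposal is correct and follows essentially the same route as the paper's proof: both start from Theorem~\ref{generalcase} to get $(P_H)_0\subset\AC$, use the $\varphi$-invariance of the components of $P_H$ together with Lemma~\ref{pointinvariance} to propagate the inclusion to $(P_H)_\gamma$ and then inductively to $(P_H)_{\gamma^n}$ for $n\geq 1$, and finally appeal to the finiteness of $\Gamma$ to cover negative $n$. The only cosmetic difference is that the paper writes the coset arithmetic element-wise ($z=xg$ with $g\in(P_H)_0$) while you package it set-wise ($(P_H)_{\gamma^{n+1}}=g_0\cdot(P_H)_{\gamma^n}$), but the underlying mechanism is identical.
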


\begin{proof}
	In fact, let $x\in (Z_H)_{\gamma}\cap\AC$. Since $(Z_H)_{\gamma}=x(Z_H)_0$ we get that any $z\in (P_H)_{\gamma}$ can be written as $z=xg$ with $g\in (P_H)_0$ and hence
	$$\OC(g, \varphi)\subset (P_H)_0\subset\AC\;\;\implies\;\; z=xg\in\AC\cdot g\subset\AC.$$
	Let us notice that $(P_H)_{\gamma^n}=((P_H)_{\gamma})^n$ and then, if $(P_H)_{\gamma^n}\subset\AC$ we have by the $\varphi$-invariance of $(P_H)_{\gamma^n}$ and Lemma \ref{pointinvariance} that $(P_H)_{\gamma^{n+1}}\subset\AC$. Since $\Gamma$ is finite, the above is true for any $n\in\Z$ showing the result.
\end{proof}

\bigskip

The next technical lemma will be useful ahead.

\begin{lemma}
	\label{technical}
	Let $\gamma\in\Gamma$ and $x\in (Z_H)_{\gamma}$.
	\begin{itemize}
		\item[(i)] It holds that $\AC(x)\subset\AC\cdot x$;
		\item[(ii)] If $\OC(x, \varphi)\subset\AC(x)$ then $\AC\cdot x\subset\AC(x)$.
	\end{itemize} 
\end{lemma}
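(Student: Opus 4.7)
I would use the solution formula $\phi(t,x,u)=\phi(t,e,u)\varphi_t(x)$ to write
\[
\phi(t,x,u) = \phi(t,e,u)\cdot \bigl[\varphi_t(x)x^{-1}\bigr]\cdot x,
\]
so the problem reduces to showing $\phi(t,e,u)\cdot\varphi_t(x)x^{-1}\in\AC$. Because $\varphi$ is a continuous one-parameter group of automorphisms, each connected component $(Z_H)_\gamma$ of the $\varphi$-invariant subgroup $Z_H$ is itself $\varphi$-invariant (the permutation of components induced by $\varphi_t$ is continuous in $t$ and trivial at $t=0$). Hence $\varphi_t(x)\in(Z_H)_\gamma=x(Z_H)_0$, yielding $\varphi_t(x)x^{-1}\in(Z_H)_0$. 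Applying an arbitrary $\varphi_s$ gives $\varphi_{s+t}(x)\varphi_s(x)^{-1}\in (Z_H)_\gamma(Z_H)_\gamma^{-1}\subset (Z_H)_0$, so the entire orbit $\OC(\varphi_t(x)x^{-1},\varphi)$ lives in $(Z_H)_0$. Under the standing assumption that $\AC$ is open, Theorem \ref{generalcase} gives $(Z_H)_0\subset (P_H)_0\subset\AC$, so this orbit sits in $\AC$. Lemma \ref{pointinvariance} then provides $\AC\cdot\varphi_t(x)x^{-1}\subset\AC$, finishing the argument.

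\textbf{Part (ii) plan.} This should follow from a one-step concatenation using the solution formula backwards. Take $a=\phi(s,e,v)\in\AC$; I want $ax\in\AC(x)$. The key identity is
\[
\phi(s,\varphi_{-s}(x),v) = \phi(s,e,v)\varphi_s(\varphi_{-s}(x)) = a\cdot x.
\]
By the hypothesis $\OC(x,\varphi)\subset\AC(x)$, there exist $r>0$ and $w\in\UC$ with $\phi(r,x,w)=\varphi_{-s}(x)$. Concatenating then yields $\phi(s+r,x,v*w)=\phi(s,\phi(r,x,w),v)=ax$, placing $ax$ in $\AC_{s+r}(x)\subset \AC(x)$.

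\textbf{Main obstacle.} The genuine content lives in (i): one has to identify the ``right factor'' $\varphi_t(x)x^{-1}$ to peel off $x$ on the right, and then certify that its entire $\varphi$-orbit (not just the element itself) sits in $\AC$, since Lemma \ref{pointinvariance} is the only tool for passing from ``$g\in\AC$'' to ``$\AC\cdot g\subset\AC$''. This forces the component-by-component analysis of $Z_H$ and the use of Theorem \ref{generalcase}. Part (ii) is essentially bookkeeping with concatenation, provided one recognises that the hypothesis $\OC(x,\varphi)\subset\AC(x)$ supplies precisely the backward-time element $\varphi_{-s}(x)$ needed to invert the action of $\varphi_s$ in the solution formula.
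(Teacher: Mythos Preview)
Your proposal is correct and follows essentially the same route as the paper. In part (i) you isolate the factor $l=\varphi_t(x)x^{-1}\in(Z_H)_0$ and invoke Lemma~\ref{pointinvariance} with $\OC(l,\varphi)\subset(Z_H)_0\subset\AC$, exactly as the paper does; in part (ii) you rewrite $ax=\phi(s,\varphi_{-s}(x),v)$ and use $\varphi_{-s}(x)\in\OC(x,\varphi)\subset\AC(x)$, which is precisely the paper's argument, the only cosmetic difference being that the paper compresses the concatenation into the single line $\phi_{\tau,u}(\AC(x))\subset\AC(x)$ rather than spelling out the intermediate time and control.
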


\begin{proof}	(i) For any $z\in\AC(x)$ consider $\tau>0$ and $u\in\UC$ with $z=\phi(\tau, x, u)$. By the $\varphi$-invariance of $(Z_H)_{\gamma}$, there exists $l\in (Z_H)_0$ such that $\varphi_{\tau}(x)=lx$ and hence
	$$z=\phi(\tau, x, u)=\phi(\tau, e, u)\varphi_{\tau}(x)=\phi(\tau, e, u)lx\implies z\in \AC\cdot lx\subset \AC\cdot x,$$
	where for the inclusion we used that $\OC(l, \varphi)\subset (Z_H)_0\subset\AC$.
	
	(ii) Let $z\in\AC$ and write it as $z=\phi(\tau, e, u)$ for some $\tau>0$ and $u\in\UC$. Then
	$$zx=\phi(\tau, e, u)x=\phi(\tau, e, u)\varphi_{\tau}\left(\varphi_{-\tau}(x)\right)\in \phi_{\tau, u}(\OC(x, \varphi))\subset\phi_{\tau, u}(\AC(x))\subset\AC(x)$$
	and by the arbitrariness of $z\in\AC$ we get $\AC\cdot x\subset\AC(x)$ concluding the proof.  
\end{proof}

\begin{remark}
	It is important to notice that the assumption that $\AC$ is open is equivalent to the existence of some $\tau>0$ such that $e\in\inner\AC_{\tau}$ (see \cite{FCWK}, Lemma 4.5.2). In particular, if $\AC$ is open the system is locally accessible (see Theorem 3.3 of \cite{AyTi}) and $\AC^*$ is also open. There is an easily checkable algebraic condition that ensures the openness of $\AC$ called the {\it ad-rank} condition (see for instance \cite{Jouan11}, Proposition 6).  
\end{remark}

\section{Control sets of linear systems on semisimple Lie groups}

In this section will be assumed that $G$ is a connected semisimple Lie group with finite center and that $\Sigma_G$ is a linear control system such that $\AC$ is open. Let us denote by $H$ the hyperbolic part of the linear vector field $\XC$, drift of the system $\Sigma_G$.

By considering the homogeneous space $G/(Z_H)_0$ we have, by the $\varphi$-invariance of $(Z_H)_0$, a well defined system $\Sigma_{G/(Z_H)_0}$ on $G/(Z_H)_0$ induced by the linear control system $\Sigma_G$ (see \cite{JPh1}, Proposition 4). We aim to show that there is a strict relation between the control sets of $\Sigma_G$ and the ones of $\Sigma_{G/(Z_H)_0}$.

If $\pi:G\rightarrow G/(Z_H)_0$ is the canonical projection and $\Psi_t$ the flow induced by $\XC$ on $G/(Z_H)_0$ we have that
$$\Psi_t\circ\pi=\pi\circ\varphi_t.$$
Moreover, if $\LC_g$ stands for the left translation in $G/(Z_H)_0$ given by $x\in G/(Z_H)_0\mapsto gx\in G/(Z_H)_0$ we have that the solutions of $\Sigma_{G/(Z_H)_0}$ satisfy
\begin{equation}
\label{solutions}
\Phi(t, \pi(g), u)=\LC_{\phi(t, e, u)}(\Psi_t(\pi(g)))=\pi(\phi(t, g, u)).
\end{equation}
In particular, for any $g\in G$
$$\pi(\AC(g))=\AC_H(\pi(g))\;\;\mbox{ and }\;\pi(\AC^*(g))=\AC^*_H(\pi(g)),$$
are the reachable sets for from $\pi(g)$ for the induced system. By our assumption on the openness of $\AC$, there exists a control set of $\Sigma_G$ with nonempty interior containing the identity in its interior (see Corollary 4.5.11 of \cite{FCWK}). By Lemma \ref{prop} it is equal to $\cl(\AC)\cap\AC^*$ and is denoted by $\CC_1$ in the sequel.

\begin{theorem}
	\label{teo}
	The projection $\pi(\CC_1)$ is a control set for $\Sigma_{G/(Z_H)_0}$ and satisfies $\pi^{-1}(\pi(\CC_1))=\CC_1$.
\end{theorem}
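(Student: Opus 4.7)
The plan is to show that $\CC_1$ is right-invariant under the subgroup $(Z_H)_0$, which immediately gives $\pi^{-1}(\pi(\CC_1))=\CC_1$ (since $g(Z_H)_0$ is the fiber of $\pi$ at $\pi(g)$), and then to identify $\pi(\CC_1)$ with the canonical control set of $\Sigma_{G/(Z_H)_0}$ around $\pi(e)$ using the characterization $\cl\AC\cap\AC^*$ from Lemma \ref{prop}(iii).

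First I would establish the $(Z_H)_0$-invariance. Since $Z_H\subset P_H$ and $Z_H\subset P_H^-$, Theorem \ref{generalcase} (applied to $\Sigma_G$ and to its time-reversed system) yields $(Z_H)_0\subset\AC\cap\AC^*$. Because $(Z_H)_0$ is $\varphi$-invariant (Section 2.3), for any $l\in(Z_H)_0$ the orbit $\OC(l,\varphi)$ lies in $\AC$, so Lemma \ref{pointinvariance} gives $\AC\cdot l\subset\AC$; applying the same to $l^{-1}\in(Z_H)_0$ produces the equality $\AC\cdot l=\AC$. The identical argument in the time-reversed setting (whose flow $\varphi_{-t}$ has the same orbits as $\varphi_t$) yields $\AC^*\cdot l=\AC^*$. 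Taking closures and intersecting, $\CC_1\cdot l=\CC_1$ for every $l\in(Z_H)_0$, which is precisely $\pi^{-1}(\pi(\CC_1))=\CC_1$.

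Next I would prove that $\pi(\CC_1)$ is a control set of $\Sigma_{G/(Z_H)_0}$. The $(Z_H)_0$-saturation of $\AC$ and $\AC^*$, together with the fact that $\pi$ is an open continuous surjection, gives $\pi(\cl\AC)=\cl\pi(\AC)$ and $\pi(\cl\AC\cap\AC^*)=\pi(\cl\AC)\cap\pi(\AC^*)$; combined with (\ref{solutions}) this yields
\begin{equation*}
\pi(\CC_1)=\cl\AC_H(\pi(e))\cap\AC_H^*(\pi(e)).
\end{equation*}
Since $\AC$ and $\AC^*$ are open in $G$ and $\pi$ is open, the sets $\AC_H(\pi(e))$ and $\AC_H^*(\pi(e))$ are open in $G/(Z_H)_0$, so $\Sigma_{G/(Z_H)_0}$ is locally accessible and the argument of Corollary 4.5.11 in \cite{FCWK} furnishes a control set of the quotient system with nonempty interior containing $\pi(e)$ in its interior. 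Lemma \ref{prop}(iii) then identifies this control set with $\cl\AC_H(\pi(e))\cap\AC_H^*(\pi(e))=\pi(\CC_1)$.

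The hard part will be the identity $\pi(\cl\AC)=\cl\pi(\AC)$, which is not automatic for a general continuous map; here it holds because $\pi$ is open and $\AC$ is saturated, giving $\pi^{-1}(\cl\pi(\AC))=\cl\pi^{-1}(\pi(\AC))=\cl\AC$ (the first equality using openness of $\pi$), and applying $\pi$ together with surjectivity yields the claim. I would also need to record the analog of Lemma \ref{pointinvariance} for $\AC^*$, which is obtained by applying the lemma itself to the time-reversed system, and to note that local accessibility descends to $\Sigma_{G/(Z_H)_0}$ by openness and equivariance of $\pi$.
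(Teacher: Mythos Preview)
Your argument is correct, but it follows a genuinely different route from the paper's. The paper first shows that $\pi(\CC_1)$ sits inside some control set $D_1$ of $\Sigma_{G/(Z_H)_0}$, and then proves $\pi^{-1}(D_1)\subset\CC_1$ by a trajectory-lifting argument: given $g_1,g_2\in\pi^{-1}(\inner D_1)$, it steers $\pi(g_1)\to o\to\pi(g_2)$ in the quotient, lifts each piece to $G$ (landing at some $l_1,l_2\in(Z_H)_0$), and then uses $(Z_H)_0\subset\AC\cap\AC^*$ to bridge $l_1$ and $l_2$. Your approach instead establishes the saturation $\CC_1\cdot(Z_H)_0=\CC_1$ up front via Lemma~\ref{pointinvariance}, and then pushes the formula $\CC_1=\cl\AC\cap\AC^*$ through $\pi$ to obtain $\pi(\CC_1)=\cl\AC_H(o)\cap\AC_H^*(o)$, which Lemma~\ref{prop}(iii) identifies with the control set around $o$. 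Your method is more algebraic and avoids any explicit trajectory concatenation; the price is the topological identity $\pi(\cl\AC)=\cl\pi(\AC)$, which you correctly justify using openness of $\pi$ and saturation of $\AC$. Both arguments ultimately hinge on the same key fact $(Z_H)_0\subset\AC\cap\AC^*$, but yours packages it once as right-invariance rather than invoking it inside a lifting step.
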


\begin{proof}
	Since $\pi$ is an open map, it holds that $\pi(\CC_1)$ has nonempty interior. Moreover, by (\ref{solutions}) we have that 
	$$\mbox{ for all }\;g\in\CC_1, \;\;\;\pi(\CC_1)\subset\pi(\cl(\AC(g)))\subset\cl(\pi(\AC(g)))=\cl(\AC_H(\pi(g))),$$
	and therefore, $\pi(\CC_1)$ is contained in a control set $D_1$ for the control system $\Sigma_{G/(Z_H)_0}$. 
	
	The result is proved if we show that $\pi^{-1}(D_1)\subset\CC_1$. However, since $\CC_1\subset\pi^{-1}(D_1)$ it is enough to show that 
	\begin{equation}
	\label{relation}
	\pi^{-1}(D_1)\subset\cl(\AC(g)) \;\;\;\mbox{ for any }\;\;g\in\pi^{-1}(D_1).
	\end{equation}
	Let then $g_1, g_2\in\pi^{-1}(\inner D_1)$. Denote by $o=e\cdot (Z_H)_0$. Since exact controllability holds on $\inner D_1$ and $o\in\inner D_1$, there exist $t_1, t_2>0$ and $u_1, u_2\in\UC$ such that 
	$$\Phi(t_1, \pi(g_1), u_1)=o\;\;\mbox{ and }\;\;\Phi(t_2, o, u_2)=\pi(g_2)\iff\phi(t_1, g_1, u_1)= l_1\;\mbox{ and }\;\phi(t_2, l_2, u_2)=g_2$$
	for some $l_1, l_2\in (Z_H)_0$. On the other hand, the openness of $\AC$ implies $(Z_H)_0\subset\AC\cap\AC^*\subset\CC_1$ and hence there exist $t_3>0$ and $u_3\in \UC$ such that 
	$$\phi(t_3, l_1, u_3)=l_2\;\;\implies\;\;g_2=\phi(t, g_1, u), \;\;\mbox{ where }\;\;t=t_1+t_2+t_2>0\;\mbox{ and }\;u=(u_1*u_2)*u_3\in\UC.$$
	Therefore, $\pi^{-1}(\inner D_1)\subset \AC(g)$ for any $g\in\pi^{-1}(\inner D_1)$. Using the fact that $\inner D_1$ is dense in $D_1$ and by maximility of $\CC_1$ we get the desired result.
\end{proof}

\bigskip

We define now a group of homeomorphisms in $G/(Z_H)_0$. For any $\gamma\in\Gamma$ let us define the map $f_{\gamma}:G/(Z_H)_0\rightarrow G/(Z_H)_0$ by $g\cdot o\in G/(Z_H)_0\mapsto gl\cdot o\in G/(Z_H)_0$, where $l\in Z_H$ satisfies $\gamma=l\cdot o$. Since $(Z_H)_0$ is a normal subgroup of $Z_H$ the map $f_{\gamma}$ is a well defined homeomorphism of $G/(Z_H)_0$ satisfying:
\begin{itemize}
	\item[(i)] $(f_{\gamma})^{-1}=f_{\gamma^{-1}}$ for all $\gamma\in\Gamma$;
	\item[(ii)] $f_{\gamma_1\gamma_2}=f_{\gamma_2}\circ f_{\gamma_1}$ for $\gamma_1, \gamma_2\in\Gamma$;
\end{itemize}

\begin{lemma}
	\label{conjugation}
	For any $\gamma\in\Gamma$ it holds that 
	$$\Phi_{t, u}\circ f_{\gamma}=f_{\gamma}\circ\Phi_{t, u} \;\;\;\mbox{ for any }\;\;t\in\R, u\in\UC, \gamma\in \Gamma.$$
	In particular, $\Psi_t\circ f_{\gamma}=f_{\gamma}\circ\Psi_t$ for any $t\in\R$ and $\gamma\in \Gamma$.
\end{lemma}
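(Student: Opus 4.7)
The plan is to unwind the definitions of both sides using the solution formula $\phi(t,g,u) = \phi(t,e,u)\varphi_t(g)$ and the fact that $\varphi_t$ is an automorphism, then reduce the identity to the $\varphi$-invariance of each connected component $(Z_H)_\gamma$.

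First I would record that each component $(Z_H)_\gamma$ is $\varphi$-invariant. This is because $Z_H$ is $\varphi$-invariant (as noted earlier in the paper), and for any fixed $\gamma$ the curve $t\mapsto \varphi_t(l)$, starting at $l\in(Z_H)_\gamma$ at $t=0$, is continuous in $Z_H$; since the components are open and closed in $Z_H$, this curve must remain in $(Z_H)_\gamma$. Consequently, picking any $l\in Z_H$ with $\gamma = l\cdot o$, we have $\varphi_t(l)\in (Z_H)_\gamma$ for every $t\in\R$, and in particular $l^{-1}\varphi_t(l)\in (Z_H)_0$. Also, the choice of representative $l$ for $f_\gamma$ is irrelevant modulo $(Z_H)_0$, so $f_\gamma(\pi(g))=\pi(gl)$ for all $g\in G$.

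Next I would expand both sides. Using (\ref{solutions}) and the automorphism property $\varphi_t(gl)=\varphi_t(g)\varphi_t(l)$, the left-hand side is
$$\Phi_{t,u}(f_\gamma(\pi(g))) = \pi\bigl(\phi(t,gl,u)\bigr) = \pi\bigl(\phi(t,e,u)\varphi_t(g)\varphi_t(l)\bigr),$$
while the right-hand side is
$$f_\gamma(\Phi_{t,u}(\pi(g))) = f_\gamma\bigl(\pi(\phi(t,g,u))\bigr) = \pi\bigl(\phi(t,e,u)\varphi_t(g)\,l\bigr).$$
Setting $h=\phi(t,e,u)\varphi_t(g)$, the two sides coincide if and only if $hl(Z_H)_0 = h\varphi_t(l)(Z_H)_0$, which is equivalent to $l^{-1}\varphi_t(l)\in (Z_H)_0$. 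But this is exactly what the $\varphi$-invariance of $(Z_H)_\gamma$ from the previous step gives us.

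The second assertion, $\Psi_t\circ f_\gamma = f_\gamma\circ\Psi_t$, then follows immediately by specializing to $u\equiv 0$ (since $\Psi_t(\pi(g))=\pi(\varphi_t(g))$ and the trivial control yields $\phi(t,e,0)=e$), or alternatively by the identical computation without the $\phi(t,e,u)$ factor. The only conceptual point is the invariance of each connected component under $\varphi_t$; once that is in hand, the rest is purely a matter of pushing the factor $l$ through the automorphism $\varphi_t$ and absorbing the difference into $(Z_H)_0$.
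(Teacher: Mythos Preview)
Your proof is correct and follows essentially the same route as the paper: both arguments expand $\Phi_{t,u}(f_\gamma(\pi(g)))$ via the solution formula and the automorphism property of $\varphi_t$, then use the $\varphi$-invariance of $(Z_H)_\gamma$ to replace $\varphi_t(l)\cdot o$ by $l\cdot o$. Your version is more explicit in justifying the component-wise invariance via continuity, whereas the paper simply asserts $\varphi_t((Z_H)_\gamma)=(Z_H)_\gamma$ and then carries out the identical chain of equalities in one line.
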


\begin{proof} Since $\varphi_t((Z_H)_{\gamma})=(Z_H)_{\gamma}$ for any $\gamma\in \Gamma$ it holds that
$$\Phi(t, f_{\gamma}(g\cdot o), u)=\phi(t, e, u)\varphi_t(gl)\cdot o=\phi(t, e, u)\varphi_t(g)\varphi_t(l)\cdot o$$
$$=\phi(t, e, u)\varphi_t(g)l\cdot o=f_{\gamma}(\phi(t, e, u)\varphi(g)\cdot o)=f_{\gamma}(\Phi(t, g\cdot o, u))$$
and the result follows.
\end{proof}

A direct consequence of Lemma \ref{conjugation} is that $D_{\gamma}:=f_{\gamma}(D_1)$ is a control set with nonempty interior of $\Sigma_{G/(Z_H)_0}$, where $D_1=\pi(\CC_1)$ is the projection of the control set $\CC_1$ of $\Sigma_G$ as proved in Theorem \ref{teo}. The set 
$$\Gamma_0:=\{\gamma\in\Gamma; \;D_{\gamma}=D_1\}$$ 
is a subgroup of $\Gamma$ and the map $\xi$ given by $\Gamma_0\gamma \in\Gamma_0\setminus\Gamma\mapsto D_{\gamma}$ is a well-defined injective map, since  
	$$\Gamma_0\gamma_1=\Gamma_0\gamma_2\iff \gamma_2\gamma_1^{-1}\in\Gamma_0\iff D_1=D_{\gamma_2\gamma_1^{-1}}\iff D_{\gamma_1}=D_{\gamma_2}.$$

The next result shows that the control sets of a linear control system are related with the control sets $D_{\gamma}$ for $\gamma\in\Gamma$.

\begin{lemma}[Fundamental Lemma]
	\label{controlquotient}
	If $\CC$ is a control set of $\Sigma_G$ with nonempty interior then
	$$\CC\subset \pi^{-1}(D_{\gamma}), \;\mbox{ for some }\;\gamma\in\Gamma.$$
\end{lemma}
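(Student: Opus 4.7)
The plan is to pass to the quotient system $\Sigma_{G/(Z_H)_0}$ and show that the image $\pi(\mathcal{C})$ coincides with one of the $D_\gamma$. Because $\pi$ is an open map, $\pi(\mathcal{C})$ has nonempty interior; and because $\pi$ intertwines the two systems via equation (\ref{solutions}), applying $\pi$ to the inclusion $\mathcal{C}\subset\mathrm{cl}(\mathcal{A}(g))$ for any $g\in\mathcal{C}$ yields $\pi(\mathcal{C})\subset\mathrm{cl}(\mathcal{A}_H(\pi(g)))$. Hence $\pi(\mathcal{C})$ sits inside some control set $D$ of $\Sigma_{G/(Z_H)_0}$ with nonempty interior, and the desired conclusion $\mathcal{C}\subset\pi^{-1}(D_\gamma)$ follows as soon as one identifies $D$ with some $D_\gamma$.

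The candidate $\gamma$ is extracted from the Bruhat-type decomposition of $g$. Since $U_H=N_HZ_HN_H^-$ is open and dense in $G$ by (\ref{open}) and $\mathrm{int}(\mathcal{C})\neq\emptyset$, pick $g\in\mathrm{int}(\mathcal{C})\cap U_H$ and write it uniquely as $g=n^+zn^-$ with $z$ lying in some connected component $(Z_H)_\gamma$ of $Z_H$. Because $f_\gamma(\pi(gz^{-1}))=\pi(g)$ and $f_\gamma(D_1)=D_\gamma$, the membership $\pi(g)\in D_\gamma$ is, by Theorem \ref{teo}, equivalent to $gz^{-1}\in\pi^{-1}(D_1)=\mathcal{C}_1$. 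Since $Z_H$ normalizes $N_H^-$, one has $gz^{-1}=n^+(zn^-z^{-1})=n^+\tilde n^-$ with $\tilde n^-\in N_H^-$, so the task reduces to showing $n^+\tilde n^-\in\mathcal{C}_1$. Once this is established, $\pi(g)\in D\cap D_\gamma$ and the maximality of control sets forces $D=D_\gamma$, so that $\mathcal{C}\subset\pi^{-1}(D)=\pi^{-1}(D_\gamma)$.

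The heart of the argument is therefore the inclusion $N_HN_H^-\subset\mathcal{C}_1=\mathrm{cl}(\mathcal{A})\cap\mathcal{A}^*$, which is the main obstacle. Individually the factors are well placed: Theorem \ref{generalcase} gives $N_H\subset(P_H)_0\subset\mathcal{A}$ and $N_H^-\subset(P_H^-)_0\subset\mathcal{A}^*$, and the $\varphi$-invariance of $N_H$ and $N_H^-$ together with Lemma \ref{pointinvariance} (and its time-reversed analogue) yields the right-translation invariances $\mathcal{A}\cdot N_H\subset\mathcal{A}$ and $\mathcal{A}^*\cdot N_H^-\subset\mathcal{A}^*$. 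To combine these into membership of the product $n^+\tilde n^-$ in both $\mathrm{cl}(\mathcal{A})$ and $\mathcal{A}^*$, one exploits the contractions $\varphi_t(\tilde n^-)\to e$ as $t\to+\infty$ and $\varphi_{-t}(n^+)\to e$ as $t\to+\infty$, coupled with the fact that a neighborhood of $e$ lies in $\mathcal{A}\cap\mathcal{A}^*$: transporting such a neighborhood by the invariances above along the $\varphi$-orbits approximates $n^+\tilde n^-$ from inside $\mathcal{A}$ and steers it into $e$ through $\mathcal{A}^*$. This asymptotic-plus-invariance step is the most delicate part of the proof.
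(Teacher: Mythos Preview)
Your reduction is clean up to the point where you claim $N_HN_H^-\subset\mathcal{C}_1$; that inclusion is false in general, and your sketch cannot be completed. Already the weaker statement $N_H^-\subset\mathrm{cl}(\mathcal{A})$ fails: in the $\mathrm{Sl}(2)$ example of Section~4 (with $\rho$ small), $\mathcal{C}_1=\pi^{-1}(D_1)$ and the $S^1$-projection of $D_1$ is the short arc $[v_{-\rho},v_{\rho}]$ around $e_1$, while for $n^-=\begin{pmatrix}1&0\\b&1\end{pmatrix}$ with $|b|$ large the point $\pi(n^-)$ lands near $\pm e_2$, outside that arc. Since $n^-\in N_H^-\subset\mathcal{A}^*$ and $\mathcal{C}_1=\mathrm{cl}(\mathcal{A})\cap\mathcal{A}^*$, this forces $n^-\notin\mathrm{cl}(\mathcal{A})$. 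So $N_H^-\not\subset\mathrm{cl}(\mathcal{A})$, and a fortiori $N_HN_H^-\not\subset\mathcal{C}_1$. The heuristic you describe---transporting a neighborhood of $e$ along $\varphi$-orbits via the right-invariances $\mathcal{A}\cdot N_H\subset\mathcal{A}$ and $\mathcal{A}^*\cdot N_H^-\subset\mathcal{A}^*$---cannot produce this inclusion, because the contraction $\varphi_t(\tilde n^-)\to e$ goes the wrong way: $\varphi_t$ for $t>0$ sends $\mathcal{A}$ into $\mathcal{A}$, not out of it, so knowing $\varphi_t(n^+\tilde n^-)$ is eventually near $N_H\subset\mathcal{A}$ says nothing about $n^+\tilde n^-$ itself.

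The deeper problem is that your argument, once you pass to ``show $n^+\tilde n^-\in\mathcal{C}_1$'', no longer uses the hypothesis $g\in\mathrm{int}\,\mathcal{C}$ at all; it tries to prove a structural fact about $N_H$, $N_H^-$ and $\mathcal{A}$ alone. The paper's proof keeps this hypothesis in play: having chosen $gh\in\mathrm{int}\,\mathcal{C}$ with $g\in(P_H)_{\gamma_1}$ and $h\in N_H^-$, exact controllability on $\mathrm{int}\,\mathcal{C}$ yields a \emph{periodic} trajectory $\phi(n\tau,gh,u)=gh$. Comparing it, via a left-invariant metric, with the trajectory $\phi(n\tau,g,u)$ started at $g$ gives
\[
\varrho\bigl(gh,\phi(n\tau,g,u)\bigr)=\varrho\bigl(\varphi_{n\tau}(h),e\bigr)\to 0,
\]
so $gh\in\mathrm{cl}(\mathcal{A}(g))\subset\mathrm{cl}(\mathcal{A}\cdot l_1)$ for $l_1\in(Z_H)_{\gamma_1}$. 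The periodic orbit is exactly the missing ingredient that ties the contraction on $N_H^-$ back to the specific point in $\mathcal{C}$; from there one propagates to all of $\mathrm{int}\,\mathcal{C}\subset\mathcal{A}\cdot l_1$, runs the symmetric argument for $\mathcal{A}^*$, and uses Proposition~\ref{improved} to reconcile the two $\gamma$'s before projecting.
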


\begin{proof}
	Let $\CC\subset G$ be a control set with nonempty interior of $\Sigma_G$. By equation (\ref{open}) the set 
	$$U_H=\dot{\bigcup_{\gamma\in \Gamma}}(P_H)_{\gamma}N_H^-$$
	is an open and dense subset of $G$ and hence $\inner \CC\cap (P_H)_{\gamma_1}N^-_H\neq\emptyset$ for some $\gamma_1\in\Gamma$. There exist
	$g\in (P_H)_{\gamma_1}$, $h\in N_H^-$ with $gh\in \inner \CC$. Since in $\mathrm{int}\CC$ we have exact controllabillity, there exists $\tau>0$ and $u\in\UC$ such that $$\phi(n\tau, gh, u)=gh,\;\;\mbox{ for each }n>0.$$
	If $\varrho$ stands for a left invariant Riemannian metric on $G$ we get
	$$\varrho(gh, \phi(n\tau, g, u))=\varrho(\phi(n\tau, gh, u), \phi(n\tau, g, u))=\varrho(\varphi_{n\tau}(h), e).$$
	Since $N_H^-=\exp\fn^-_H$ and $\ad(X)|_{\fn_H^-}$ has only eingevalues with negative real part, we have that $\varphi_{n\tau}(h)\rightarrow e$ as $n\rightarrow+\infty.$
	On the other hand, the fact that $g\in (P_H)_{\gamma_1}$ implies that $\phi(n\tau, g, u)\in \AC\cdot l_1$ for each $n>0$, where $l_1\in (Z_H)_{\gamma_1}$. Hence, $gh\in\cl(\AC\cdot l_1)$ implying that $\inner \CC\cap \AC\cdot l_1\neq\emptyset$. Let then $x\in\inner \CC\cap\AC\cdot l_1$ and $y\in\inner\CC$. By exact controllability there exists $t>0$ and $u\in\UC$ with $y=\phi_{t, u}(x)$. If we write $\varphi_t(l_1)=ml_1$ with $m\in (Z_H)_{\gamma_1}$ we get
	$$ y=\phi_{t, u}(x)\in\phi_{t, u}\left(\AC\cdot l_1\right)=\phi_{t, u}\left(\AC\right)\cdot\varphi_{t}(l_1)\subset \AC\cdot ml_1\subset\AC\cdot l_1$$
	where for the last inclusion we used Lemma \ref{pointinvariance}. By the arbitrariness of $y\in\inner \CC$ we conclude that
	$$\inner \CC\subset\AC\cdot l_1.$$
	By arguing analogously for $U_H^{-1}$ we assure the existence of $\gamma_2\in\Gamma$ such that $\inner \CC\subset \AC^*\cdot l_2$ where $l_2\in (Z_H){\gamma_2}$. Therefore, 
	\begin{equation}
	\label{1}
	\inner \CC\subset\AC\cdot l_1\cap \AC^*\cdot l_2, \;\;\mbox{ where }\;\;l_i\in (Z_H)_{\gamma_ i}, \;i=1, 2.
	\end{equation} 	
	In particular, $\AC\cdot l_1\cap \AC^*\cdot l_2\neq\emptyset$. Let then $x\in \AC\cdot l_1\cap \AC^*\cdot l_2$ and consider $t_1, t_2>0$ and $u_1, u_2\in\UC$ such that 
	$$x=\phi_{t_1, u_1}(e)\cdot l_1\;\;\mbox{ and }\;\; \phi_{t_2, u_2}(xl_2^{-1})=e\;\;\implies e=\phi_{t_2, u_2}(\phi_{t_1, u_1}(e)l_1l_2^{-1})=\phi_{t, u}(e)\varphi_{t_2}(l_1l_2^{-1}).$$
	where $t=t_1+t_2$ and $u=u_1*u_2$. Therefore, $\varphi_{t_2}(l_2l^{-1}_1)=\phi_{t, u}(e)\in\AC$ implying that $(Z_H)_{\gamma_2\gamma_1^{-1}}\cap\AC\neq\emptyset$. By Proposition \ref{improved} we get  
	\begin{equation}
	\label{2}
	(P_H)_{(\gamma_1\gamma_2^{-1})}=(P_H)_{(\gamma_2\gamma_1^{-1})^{-1}}\subset\AC\;\;\implies \;\;(Z_H)_{\gamma_1}\cap \AC\cdot l_2\neq\emptyset.
	\end{equation}
	Since, $\pi(\AC\cdot l_i)=\AC_H(\gamma_i)$ and $\pi(\AC^*\cdot l_i)=\AC_H^*(\gamma_i)$, equation (\ref{2}) implies 
	$$\gamma_1\in\pi(\AC\cdot l_2)=\AC_H(\gamma_2)\;\;\implies\;\;\AC_H(\gamma_1)\subset\AC_H(\gamma_2)$$
	and using equation (\ref{1}) we get 
	$$\pi(\inner \CC)\subset\pi(\AC\cdot l_1)\cap\pi(\AC^*\cdot l_2)\subset \AC_H(\gamma_1)\cap\AC_H^*(\gamma_2)\subset \AC_H(\gamma_2)\cap\AC_H^*(\gamma_2)\subset D_{\gamma_2}$$ 
	 concluding the proof.	
\end{proof}

Now we can prove our main result.

\begin{theorem}
	If $\CC$ is a control set with nonempty interior of $\Sigma_G$ then 
	$$\CC\subset R_l(\CC_1), \;\;\mbox{ for some }l\in Z_H.$$
\end{theorem}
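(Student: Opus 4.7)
The plan is to chain the Fundamental Lemma together with a direct description of $\pi^{-1}(D_\gamma)$ as a right translate of $\CC_1$. First I would apply Lemma \ref{controlquotient} to $\CC$ to produce some $\gamma \in \Gamma$ with $\CC \subset \pi^{-1}(D_\gamma)$, where $D_\gamma = f_\gamma(D_1)$ and $D_1 = \pi(\CC_1)$.

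Next I would fix an element $l \in Z_H$ representing $\gamma$ (so that $\gamma = l \cdot o$) and observe that the defining formula of $f_\gamma$ gives the intertwining identity
\[
f_\gamma \circ \pi = \pi \circ R_l,
\]
since for every $g \in G$ one has $f_\gamma(\pi(g)) = gl \cdot o = \pi(gl) = \pi(R_l(g))$. Applying this identity to $\CC_1$ yields $D_\gamma = \pi(R_l(\CC_1))$, so in particular $R_l(\CC_1) \subset \pi^{-1}(D_\gamma)$.

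The main step is then to upgrade this to the equality $\pi^{-1}(D_\gamma) = R_l(\CC_1)$, which combined with the first step produces $\CC \subset R_l(\CC_1)$. For this I would invoke Theorem \ref{teo}, which gives $\CC_1 = \pi^{-1}(\pi(\CC_1)) = \CC_1 \cdot (Z_H)_0$, meaning that $\CC_1$ is saturated under the right action of $(Z_H)_0$. Since $(Z_H)_0$ is normal in $Z_H$ and $l \in Z_H$, the identity $l \cdot (Z_H)_0 = (Z_H)_0 \cdot l$ yields
\[
R_l(\CC_1) \cdot (Z_H)_0 = \CC_1 \cdot l \cdot (Z_H)_0 = \CC_1 \cdot (Z_H)_0 \cdot l = \CC_1 \cdot l = R_l(\CC_1),
\]
so $R_l(\CC_1)$ is itself $\pi$-saturated. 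Hence $\pi^{-1}(\pi(R_l(\CC_1))) = R_l(\CC_1)$, which together with $\pi(R_l(\CC_1)) = D_\gamma$ gives the desired equality.

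I do not anticipate a serious obstacle here: the Fundamental Lemma supplies the substance, and what remains is careful bookkeeping between $G$ and the quotient $G/(Z_H)_0$, relying crucially on the normality of $(Z_H)_0$ inside $Z_H$ to commute $l$ past $(Z_H)_0$.
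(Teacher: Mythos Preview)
Your proposal is correct and follows essentially the same approach as the paper: apply the Fundamental Lemma to place $\CC$ inside some $\pi^{-1}(D_\gamma)$, then use the intertwining $f_\gamma\circ\pi=\pi\circ R_l$ together with Theorem~\ref{teo} (the saturation $\pi^{-1}(\pi(\CC_1))=\CC_1$) to identify $\pi^{-1}(D_\gamma)$ with $R_l(\CC_1)$. The paper packages the last step as a single chain of equivalences rather than an explicit saturation argument via normality of $(Z_H)_0$, but the content is identical.
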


\begin{proof}
	In fact, for any $l\in (Z_{H})_{\gamma}$, it holds that $\pi\circ R_l=f_{\gamma}\circ\pi$ and consequently
	$$\pi^{-1}(D_{\gamma})=R_l(\CC_1).$$
	In fact, 
	$$x\in \pi^{-1}(D_{\gamma})\iff\pi(x)\in D_{\gamma}=f_{\gamma}(D_1)\iff f_{\gamma^{-1}}(\pi(x))\in D_1\iff \pi(R_{l^{-1}}(x))\in D_1=\pi(\CC_1)$$
	$$\iff R_{l^{-1}}(x)\in \pi^{-1}(\pi(\CC_1))=\CC_1\iff x\in R_l(\CC_1)$$
	and the result follows from Lemma \ref{controlquotient}.
\end{proof}

The next result shows that for linear control system whose drift has trivial nilpotent part the right translations of $\CC_1$ coincides with the control sets of $\Sigma_{G}$. This case is particularly important because generically linear vector fields have trivial nilpotent part since it is true as soon $\alpha(H)\neq 0$ for $\alpha\in\Pi$. 

\begin{theorem}
	\label{hyper}
	If $\AC$ is open and $\XC$ has trivial nilpotent part, then 
	$$R_l(\CC_1)=\pi^{-1}(D_{\gamma})$$
	are all the control sets with nonempty interior of $\Sigma_G$. In this case $\Sigma_G$ admits exactly $|\Gamma|/|\Gamma_0|$ control sets with nonempty interior.
\end{theorem}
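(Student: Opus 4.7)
The plan is to show that each right translate $R_l(\CC_1)$ --- equivalently $\pi^{-1}(D_\gamma)$ by the preceding theorem --- is itself a control set of $\Sigma_G$ with nonempty interior. Combined with the preceding theorem (which gives $\CC\subset R_l(\CC_1)$ for any control set $\CC$ with nonempty interior), maximality of $\CC$ forces $\CC=R_l(\CC_1)$. The count then reduces to counting the distinct $D_\gamma$'s, which by the injectivity of the map $\xi$ defined just before Lemma \ref{controlquotient} equals $|\Gamma_0\backslash\Gamma|=|\Gamma|/|\Gamma_0|$.

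First I would verify controlled invariance of $R_l(\CC_1)$. Since $\CC_1=\pi^{-1}(D_1)$ (Theorem \ref{teo}) and $\pi(hz)=\pi(h)$ for every $z\in(Z_H)_0$, one has $\CC_1\cdot(Z_H)_0=\CC_1$; the normality of $(Z_H)_0$ in $Z_H$ upgrades this to $R_l(\CC_1)\cdot(Z_H)_0=R_l(\CC_1)$. For $g=hl\in R_l(\CC_1)$, let $u\in\UC$ witness controlled invariance of $\CC_1$ at $h$. The $\varphi$-invariance of the coset $(Z_H)_\gamma$ gives $l^{-1}\varphi_t(l)\in(Z_H)_0$, so
\[
\phi(t,g,u)=\phi(t,h,u)\,\varphi_t(l)=\phi(t,h,u)\,l\,\bigl(l^{-1}\varphi_t(l)\bigr)\in\CC_1\cdot l\cdot(Z_H)_0=R_l(\CC_1)
\]
for every $t>0$, establishing controlled invariance.

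The crux is approximate controllability on $\pi^{-1}(D_\gamma)$, and this is where the trivial nilpotent hypothesis is essential. With $X=E+H$, the flow restricts to $\varphi_t|_{Z_H}=C_{\rme^{tE}}$ (because $\rme^{tH}$ centralizes $Z_H$), and since $E\in\fk_H$ and $K$ is compact, the closure $\overline{\{\rme^{tE}:t\in\R\}}$ is a compact torus in $K$. Consequently $\{\varphi_t|_{(Z_H)_0}\}$ has compact closure in $\mathrm{Aut}((Z_H)_0)$ and is almost periodic, with $e$ as a recurrent limit point. Given $g_1,g_2\in\pi^{-1}(\inner D_\gamma)$, exact controllability on $\inner D_\gamma$ for the locally accessible quotient (Lemma \ref{prop}(iii) applied to $\Sigma_{G/(Z_H)_0}$) yields $t>0$, $u\in\UC$ with $\phi(t,g_1,u)=g_2z$ for some $z\in(Z_H)_0$. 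I would then correct the residual $z$ by concatenation: the almost periodic recurrence of $\rme^{tE}$ to the identity, together with $(Z_H)_0\subset\CC_1$ and exact controllability on $\inner\CC_1$, shows that $\OC(g_2,\varphi)\subset\cl\AC(g_2)$, and a closure-version of Lemma \ref{technical}(ii) applied at $g_2$ yields $\AC\cdot g_2\subset\cl\AC(g_2)$. Concatenation then gives $g_2\in\cl\AC(g_2z)\subset\cl\AC(g_1)$, as required.

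The main obstacle is precisely this approximate controllability step, specifically the construction of the correcting trajectory in the fiber $(Z_H)_0$. Without the trivial nilpotent hypothesis the drift on $Z_H$ acquires a unipotent component forcing orbits to escape, so the recurrence of $\varphi_t|_{(Z_H)_0}$ to the identity on which the correction rests fails. Once controlled invariance and approximate controllability are established, $\pi^{-1}(D_\gamma)$ satisfies both control-set axioms and contains any control set mapped into it by the preceding theorem; maximality gives $\CC=\pi^{-1}(D_\gamma)=R_l(\CC_1)$, and the count $|\Gamma|/|\Gamma_0|$ follows from the injective parameterization $\Gamma_0\backslash\Gamma\to\{D_\gamma\}$.
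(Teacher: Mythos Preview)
Your overall strategy---show that each $\pi^{-1}(D_\gamma)$ is itself a control set, then invoke the Fundamental Lemma and maximality---matches the paper's, and your controlled-invariance paragraph is fine. The difficulty is the approximate-controllability step, where your argument has a real gap.

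You want $g_2\in\cl\AC(g_1)$ from $\phi(t,g_1,u)=g_2 z$ with $z\in(Z_H)_0$, and you try to ``correct'' the residual $z$ by applying a closure version of Lemma~\ref{technical}(ii) at $g_2$. Two problems. First, Lemma~\ref{technical} is formulated for $x\in(Z_H)_\gamma$, and its hypothesis $\OC(x,\varphi)\subset\AC(x)$ (or your closure version) is not available for a generic $g_2\in\pi^{-1}(D_\gamma)$: once $g_2$ has a nontrivial $N_H$ or $N_H^-$ component, the conjugation by $\rme^{tH}$ expands or contracts it and the almost-periodic recurrence of $\rme^{tE}$ no longer controls the full orbit $\OC(g_2,\varphi)$. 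Your justification (``$(Z_H)_0\subset\CC_1$ and exact controllability on $\inner\CC_1$'') concerns the fibre over the base point, not the point $g_2$. Second, even granting $\AC\cdot g_2\subset\cl\AC(g_2)$, this is a statement about \emph{left} multiplication by $\AC$ at $g_2$, whereas you need $g_2\in\cl\AC(g_2 z)$, i.e.\ to undo a \emph{right} factor $z$ starting from $g_2 z$. The implication you assert (``concatenation then gives\ldots'') does not follow from what you have.

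The paper sidesteps both issues by working at a point $x\in(Z_H)_\gamma$ rather than at an arbitrary $g_2$. There the trivial-nilpotent hypothesis gives $\varphi_t|_{Z_H}=C_{\rme^{tE}}$ with $\{\rme^{tE}\}$ relatively compact in $K$, so $\cl\OC(x,\varphi)$ is compact; Corollary~4.5.11 of \cite{FCWK} then produces a control set $\CC_x$ with $\cl\OC(x,\varphi)\subset\inner\CC_x$. Exact controllability on $\inner\CC_x$ now gives $\OC(x,\varphi)\subset\AC(x)$, so Lemma~\ref{technical}(ii) applies legitimately and yields $\AC(x)=\AC\cdot x$. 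One then checks that $\AC(x)=\AC(y)$ and $\AC^*(x)=\AC^*(y)$ for all $x,y\in(Z_H)_\gamma$, hence $\CC_x=\CC_y$, and finally identifies $\pi^{-1}(\inner D_\gamma)$ with $\bigcup_{x,y\in(Z_H)_\gamma}\AC\cdot x\cap\AC^*\cdot y\subset\inner\CC_x$, forcing $\CC_x=\pi^{-1}(D_\gamma)$. The counting via $|\Gamma|/|\Gamma_0|$ is as you say. If you want to repair your direct argument, the cleanest fix is to replace $g_2$ by some $l\in(Z_H)_\gamma$ and invoke Corollary~4.5.11 there; trying to run the recurrence/Lemma~\ref{technical} machinery at a general $g_2$ is where it breaks.
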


\begin{proof}
	Since we already have that any control set with nonempty interior is contained in $\pi^{-1}(D_{\gamma})$ for some $\gamma\in \Gamma$ it is enough to show that, if the nilpotent part of $\XC$ is trivial, we actually have that $\pi^{-1}(D_{\gamma})$ is a control set of $\Sigma_G$ for any $\gamma\in\Gamma$.
	
	By the assumption on the linear vector field we have that the flow of the linear vector field restricted to $Z_H$ is given by $\varphi_t|_{Z_H}=C_{\rme^{tE}}$. Since $E\in\fk_H$ and $K$ is compact, we get that $\{\rme^{tE}, \;t\in\R\}\subset K$ is bounded and hence, if $x\in Z_H$ we have that $\cl(\OC(x, \varphi))$ is a compact subset.  Therefore, for any $\gamma\in \Gamma$ and any $x\in (Z_H)_{\gamma}$ we obtain by Corollary 4.5.11 of \cite{FCWK} that there exists a control set $\CC_x$ with nonempty interior such that 
	$$\cl(\OC(x, \varphi))\subset\inner \CC_x.$$
	In particular $\CC_x=\cl(\AC(x))\cap\AC^*(x)$ but $\OC(x, \varphi)\subset\AC(x)$ implies by item (ii) of Lemma \ref{technical} that $\AC(x)=\AC\cdot x$ for any $x\in (Z_H)_{\gamma}$. On the other hand, for any $x, y\in (Z_H)_{\gamma}$ there is $z\in (Z_H)_0$ with $zx=y$ which gives us
	$$\AC(x)=\AC\cdot x\subset \AC\cdot zx=\AC\cdot y=\AC(y)$$
	and hence $\AC(x)=\AC(y)$. Analogously, for any $x, y\in (Z_H)_{\gamma}$ we get $\AC^*(x)=\AC^*(y)$ implying that $\CC_{x}=\CC_y$.  Moreover, 
	$$\inner D_{\gamma}=\AC_{H}(\gamma)\cap \AC^*_{H}(\gamma)\implies \pi^{-1}(\inner D_{\gamma})=\bigcup_{x, y\in (Z_H)_{\gamma}}\AC\cdot x\cap\AC\cdot y=\AC(x)\cap\AC^*(x)\subset \inner \CC_x.$$
	Using that $\pi(\CC_x)\subset D_{\gamma}$ and that $\inner D_{\gamma}$ is dense in $D_{\gamma}$ we have $\CC_x=\pi^{-1}(D_{\gamma})$ as stated.	
\end{proof}

\begin{remark}
	By following the idea of the proof of the above theorem, it is not hard to show that $\pi^{-1}(D_{\gamma})$ is a control set as soon as $(Z_H)_{\gamma}$ possesses a fixed or a periodic point for the flow of $\XC$, even when the nilpotent part of $\XC$ is not trivial. 
\end{remark}

We finish this section by showing that the existence of an invariant control set is equivalent to the controllability of $\Sigma_G$.

\begin{theorem}
	The only possible positively-invariant (resp. negatively-invariant) control set of $\Sigma_G$ with nonempty interior is $G$.
\end{theorem}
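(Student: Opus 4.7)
The plan is to argue by contradiction: assume $\CC$ is a positively-invariant control set with nonempty interior and $\CC \neq G$. The negatively-invariant case is symmetric, since $\CC$ is a positively-invariant control set of the time-reversed linear system on $G$ (whose drift has Jordan parts $-E,-H,-N$, interchanging the roles of $(P_H)_0$ and $(P_H^-)_0$), so the positively-invariant statement applied there yields the result.

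First I would establish that $\CC$ is closed and that $\CC = \cl\AC(y)$ for every $y \in \inner\CC$. Indeed, Lemma \ref{prop}(iii) gives $\CC = \cl\AC(y) \cap \AC^*(y)$, while positive invariance gives $\AC(y) \subset \CC$; a standard continuous-dependence-on-initial-data argument shows that $\cl\CC$ is still a positively-invariant control set with nonempty interior, so maximality of $\CC$ forces $\CC = \cl\CC$. Combining $\AC(y) \subset \CC = \cl\AC(y) \cap \AC^*(y)$ then collapses to $\CC = \cl\AC(y)$, so $\CC$ is the closure of the forward reachable set from any interior point and in particular $\AC(y) \subset \AC^*(y)$.

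The main step is to conclude $\CC = G$ using the Main Theorem $\CC \subset R_l(\CC_1)$ for some $l \in Z_H$. Projecting via $\pi\colon G \to G/(Z_H)_0$ and using (\ref{solutions}), the image $\pi(\CC)$ is a positively-invariant control set with nonempty interior of the induced system $\Sigma_{G/(Z_H)_0}$ contained in $D_\gamma$. The plan is to show that no proper positively-invariant control set with nonempty interior can exist in $\Sigma_{G/(Z_H)_0}$, which forces $D_\gamma = G/(Z_H)_0$, hence $\pi^{-1}(D_\gamma) = R_l(\CC_1) = G$, so $\CC_1 = G$ and therefore $\CC = G$. The main obstacle is precisely this quotient claim: one would combine the openness and density of $U_H = \bigcup_\gamma (P_H)_\gamma N_H^-$, the $\Gamma$-bootstrap provided by Proposition \ref{improved} and Lemma \ref{pointinvariance} (cycling $(P_H)_{\gamma^n} \subset \AC$ across the finite component group), and the attractor-repeller structure of the induced flow $\Psi_t$ on the flag-like quotient $G/(Z_H)_0$ to force $D_\gamma$ to be simultaneously open and closed in the connected space $G/(Z_H)_0$.
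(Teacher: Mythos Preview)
Your proposal has a genuine gap: the ``main step'' is exactly where the real work lies, and you have only named it, not done it. You correctly reduce to showing that a positively-invariant control set with nonempty interior on $G/(Z_H)_0$ must be the whole quotient, but the sketch you offer (density of $U_H$, the $\Gamma$-bootstrap, ``attractor--repeller structure'') is not an argument. In particular $G/(Z_H)_0$ is \emph{not} a flag manifold---the flag of type $H$ is $G/P_H$, and the fibres $P_H/(Z_H)_0 \cong N_H$ are noncompact---so there is no ready-made Morse/Conley decomposition for $\Psi_t$ to exploit, and the ``open and closed'' strategy has no evident source of closedness.

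The paper's proof takes a completely different route. It does \emph{not} try to show directly that $D_\gamma = G/(Z_H)_0$. Instead, from the positive invariance of $\CC$ it deduces (via the Fundamental Lemma and the conjugation $f_{\gamma^{-1}}$) that $D_1$, and hence $\CC_1 = \pi^{-1}(D_1)$, is positively-invariant and therefore closed. At this point two external ingredients are invoked that your proposal misses entirely: (i) Theorem~3.6 of \cite{GzDaAv}, which turns closedness of $\CC_1$ into $\AC^* = G$; and (ii) a semigroup reversibility argument (San Martin--Tonelli \cite{ST95}, Theorem~6.7) comparing $\Sigma_G$ with the associated right-invariant system $\Sigma_I$ to show that $\AC^* = G \Leftrightarrow \AC = G$ on a semisimple group with finite center. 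The combination yields $\AC = G$ and hence $\CC = G$. Neither of these two ingredients is reproducible from the density/bootstrap/flow hints you listed, so your outline, as it stands, does not close.
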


\begin{proof} Our proof is divided in two steps:
	
	\medskip
	
	{\bf Step 1:} $\mathcal{A}=G$ if and only if $\mathcal{A}^*=G$
	
	Since both cases are analogous we will only show that $\mathcal{A}=G\;\implies\;\mathcal{A}^*=G$. Recall that $G$ being semisimple the derivation $\mathcal{D}=-\ad(\mathcal{X})$ is inner and equal to $-\ad(X)$ for some right-invariant
		vector field $X$. We can consequently define an invariant system $\Sigma_I$ by:
		\begin{flalign*}
		&&\dot{g}(t)=X(g(t))+\sum_{j=1}^mu_j(t)Y^j(g(t)), \; \; \;u=(u_1, \ldots, u_m)\in \UC. &&\hspace{-1cm}\left(\Sigma_I\right)
		\end{flalign*}
		
		The solutions $\phi^I$ of $\Sigma_I$ and $\phi$ of $\Sigma_G$ are related by 
		\begin{equation}
		\label{bla}
		\phi^I(t, g, u)=R_{\rme^{tX}}(\phi(t, g, u)), \;\;\mbox{ for any }t\in\R, g\in G, u\in\UC.
		\end{equation}
		An easy proof of equation \ref{bla} can be found in \cite{Jouan11}, Proposition 8. As consequence the reachable set at time $t\geq 0$ from the identity for
		$\Sigma_I$ is $\mathcal{S}_t=\mathcal{A}_t\exp(tX)$.
		
		The reachable set from the identity $\mathcal{S}=\bigcup_{t\geq 0}\mathcal{S}_t$ is
		a semigroup with nonempty interior. It is said to be left reversible (resp. right reversible) if
		$\mathcal{S}\mathcal{S}^{-1}=G$ (resp. $\mathcal{S}^{-1}\mathcal{S}=G$). Following Theorem 6.7 of \cite{ST95}, if $G$ is a connected semisimple Lie group with finite center then $G$ itself is the only subsemigroup with nonempty interior which is left or right reversible.
		
		Assume then that $\mathcal{A}=G$. The first thing to show is that $\mathcal{S}=G$. Let $g\in G$.
		Since $\mathcal{A}=G$ there exists $t\geq 0$ such that $g\in
		\mathcal{A}_t=\mathcal{S}_t\rme^{-tX}$. This implies $\mathcal{S}\exp(-\R_+X)=G$. However, since $\exp(-\mathbb R_+X)\subset\mathcal{S}^{-1}$  we obtain
		$\mathcal{S}\mathcal{S}^{-1}=G$ and hence $\mathcal{S}=G$.
		
		We can now prove that $\mathcal{A}^*=G$. Let $g\in G$. There exists $t\geq 0$ such
		that $g^{-1}\in \mathcal{S}_t$ or equivalently $g^{-1}\rme^{-tX}\in\mathcal{A}_t$. Consequently:
		$$
		\rme^{-tX} =g^{-1}\rme^{-tX}\rme^{tX}g\rme^{-tX}=g^{-1}\rme^{-tX}\varphi_t(g)\in \mathcal{A}_t\varphi_t(g)=\mathcal{A}_t(g).
		$$
		But $\mathcal{A}=G$ and there exists $s> 0$ and $u\in\UC$ such that
		$\exp(tX)=\phi_{s, u}(e)$, so that:
		$$
		e=\rme^{-tX}\varphi_s(\rme^{-tX})=\phi_{s, u}(e)\varphi_s(\rme^{-tX})=\phi_{s, u}(\rme^{-tX})\in \phi_{s, u}(\AC^+(g))\subset\AC(g).
		$$
		and hence $g\in\AC^*$ concluding the proof.
		
		{\bf Step 2:} If $\Sigma_G$ admits a positively-invariant (resp. negatively-invariant) control set $\CC$ with nonempty interior then $\CC=G$. 
	 
	 In fact, let us assume that $\CC$ is a positively-invariant control set of $\Sigma_G$ with nonempty interior. By Theorem \ref{controlquotient} we get that $\pi(\CC)\subset D_{\gamma}$ for some $\gamma\in\Gamma$. Since $\pi(\inner \CC)\subset \inner D_{\gamma}$ and exact controllability holds on $\inner D_{\gamma}$, we can always build an periodic orbit passing for a given point in $\inner D_{\gamma}$ and intersecting $\pi(\inner \CC)$ which by the positively-invariance of $\CC$ implies that $\inner D_{\gamma}\subset\pi(\CC)$ and hence that $D_{\gamma}$ is positively-invariant. Since $D_1=f_{\gamma^{-1}}(D_{\gamma})$ we get that $D_1$ is also positively-invariant and by Theorem \ref{teo}, the same holds for $\CC_1$. In particular, $\CC_1$ is closed which by Theorem 3.6 of \cite{GzDaAv} implies that $\AC^*=G$ and by the previous step that $\AC=G$ which implies the result.
\end{proof}

\begin{remark}
	It is important to remark that Step 1 on the previous proof was first stated in \cite{Jouan11} but for unrestricted
	inputs. Moreover, the idea of the ``reversible semigroups" trick comes from \cite{SA01}.
\end{remark}

\section{Example}

Let $G=\mathrm{Sl}(2)$ be the three-dimensional semisimple Lie group of the $2\times 2$ matrices with determinant equal to one. Its Lie algebra is given by $\fg=\mathrm{sl}(2)$, the set of the $2\times 2$ matrices with zero trace.

Denote any element $h\in G$ by $h=(v_1, v_2)$, where $v_1, v_2\in\R^2$ satisfies $\langle v^*_1, v_2\rangle=1$. Here $v_1^*$ is the orthogonal vector of $v_1$ obtained by a counter-clockwise rotation of $\pi/2$. If $A:=\{g\in G; \;g\;\mbox{ is diagonal with positive entries}\}$ we have the diffeormorphism
$$\psi: G/A\rightarrow S^1\times \R\;\;\mbox{ defined by }\;\;\psi(v_1, v_2)=\left(\frac{v_1}{|v_1|}, \langle v_1, v_2\rangle\right).
$$
Note that  
$$\psi (g(v_1, v_2))=\pi(gv_1, gv_2)= \left(\frac{gv_1}{|gv_1|}, \langle gv_1, gv_2\rangle\right), \;\;\mbox{ for any }\;\;g\in G$$
and hence, the flow of any right-invariant vector field $\rme^{tX}$, $X\in\fg$, passes to $S^1\times \R$ as 
\begin{equation}
\label{u}
(t, (v, x))\in\R\times (S^1\times \R)\mapsto \left(\frac{\rme^{tX}v}{|\rme^{tX}v|}, \langle \rme^{tX}v, \rme^{tX}(xv+v^*)\rangle\right)
\end{equation}
where we used that $\psi(v, xv+v^*)=(v, x)$. 

A simple calculation shows that the flow (\ref{u}) is associated with the vector field
$$f_X(v, x)=\Bigl(Xv-\langle Xv, v)v, \langle Xv, xv+v^*\rangle+\langle v, X(xv+v^*)\Bigr).$$
We are interested in the dynamical behaviour of the flow of $f_H$ where $H\in\fg$ has a pair of distinct real eigenvalues. For such case, there is a basis $\{v_H, v_H^*\}$ of eigenvalues of $H$ that we always assume ordered such that $v_H$ is associated with the positive eigenvalue. 

For such case, the first component of the solution is given by 
$$\phi_1(t, (v, x))=\frac{\rme^{tH}v}{|\rme^{tH}v|}$$  
and its dynamical behaviour in the circle is given as in the picture ahead (see Figure \ref{a}).

The second component of the solution of $f_H$ is then 
$$\phi_2(t, (v, x))=\cos^2\theta_H(v)\rme^{2t\lambda_H}(x-\tan\theta_H(v))+\sin^2\theta_H(v)\rme^{-2t\lambda_H}(x+\cot\theta_H(v))$$
where $\lambda_H$ is the positive eigenvalue of $H$ and $\theta_H(v)$ is defined by $\cos\theta_H(v)=\langle v_H, v\rangle_H$ where $\langle\cdot, \cdot\rangle_H$ is the inner product that makes $\{v_H, v_H^*\}$ orthogonal. The dynamical behaviour of $\phi_2$ is given by the right Figure \ref{a}.
\begin{figure}[h!]
	\begin{center}
		\includegraphics[scale=.3]{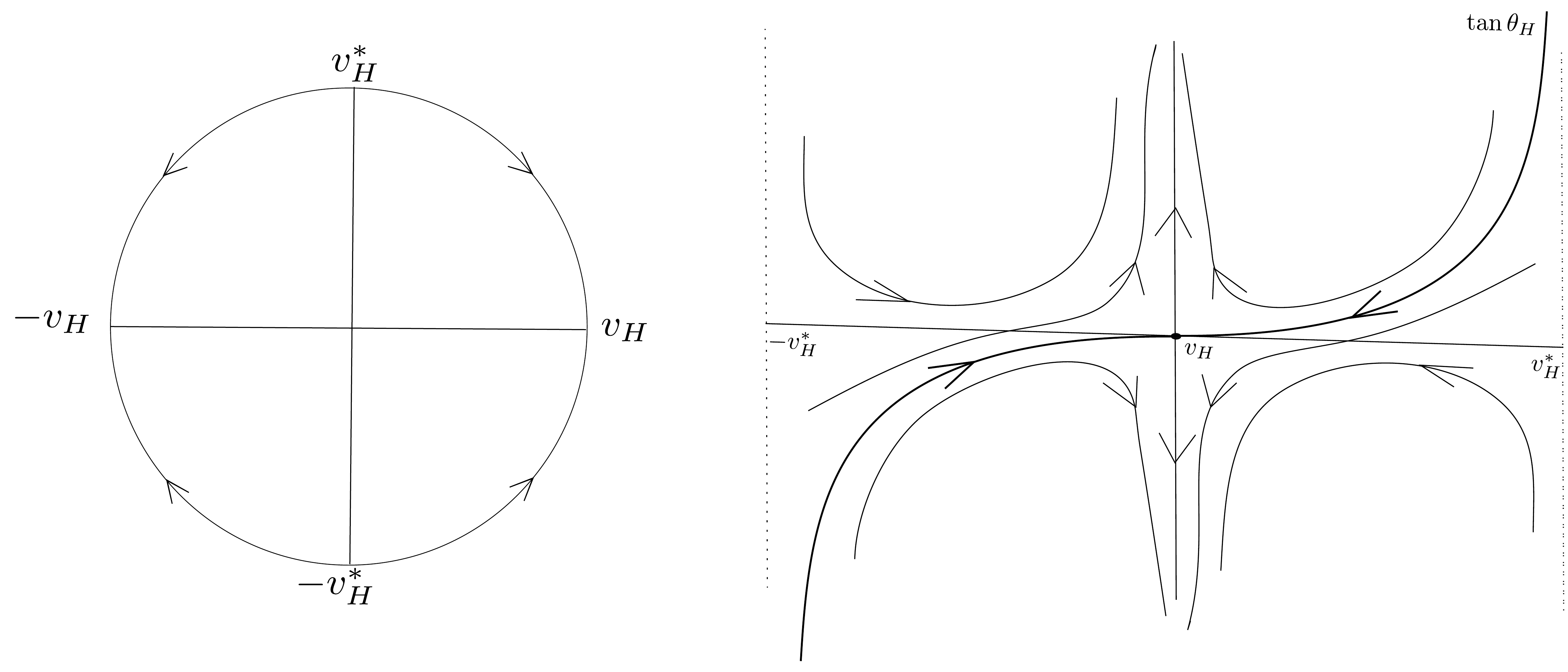}
	\end{center}
	\par
	\caption{Dynamical behaviour of $\phi_1$ and $\phi_2$.}
	\label{a}
\end{figure}

Let us consider now a linear control system $\Sigma_G$ given by $\dot{g}=\XC(g)+uX$ with $u\in [-\rho, \rho], \;\rho >0$ and
$\DC=-\ad(H)$ the associated derivation. Assume that
\begin{itemize}
	\item[1.] $H$ is a nonzero diagonal matrix;
	\item[2.] $H_u:=H+uX$ has a pair of distinct real eigenvalues for any $u\in [-\rho, \rho]$;
	\item[3.] $\{X, [H, X], [H, [H, X]]\}$ is a basis for $\fg$ \footnote{A pair of matrices satisfying the conditions are, for instance, $H=\left(\begin{array}{cc} 1 & 0 \\ 0 & -1\end{array}\right)\;\;\mbox{ and }\;\;X=\left(\begin{array}{cc} 1 & 1 \\ 1/2 & -1\end{array}\right)$.}.
\end{itemize}	

The above conditions imply that the system is not controllable (see \cite{AySM}), that $\AC$ is open and that $Z_H=\pm A$. Moreover, the fact that $\rme^{tH}\in A, \;t\in\R$ implies that the induced control on $G/A$ coincides with the associated invariant system. Moreover, the fact that the piecewise constant control functions are dense in $\UC$, implies that we only need to analyze how the concatenations of $\rme^{tH_u}$ for $u\in [-\rho, \rho]$ acts on $S^1\times \R$. 	

Following \cite{FCWK}, Chapter 6, the system on $S^1$ given by $\phi_1$ has, for small $\rho>0$, four control sets where the one containing $e_1$ is the closed interval on $S^1$ given by $[v_{-\rho}, v_{\rho}]$, where $v_u:=v_{H_u}$, $u\in[-\rho, \rho]$ is the attractor of $\rme^{tH_u}$. The control set $D_1$ that contains $\pi(e)=e_1$ is then given in Figure \ref{b}. Moreover, since $Z_H/(Z_H)_0=\{\pm 1\}$ implies that $f_{-1}(v, x)=(-v, x)$ and hence $D_{-1=}f_{-1}(D_1)$ (see Figure \ref{b}).

\begin{figure}[h]
	\begin{center}
		\includegraphics[scale=.4]{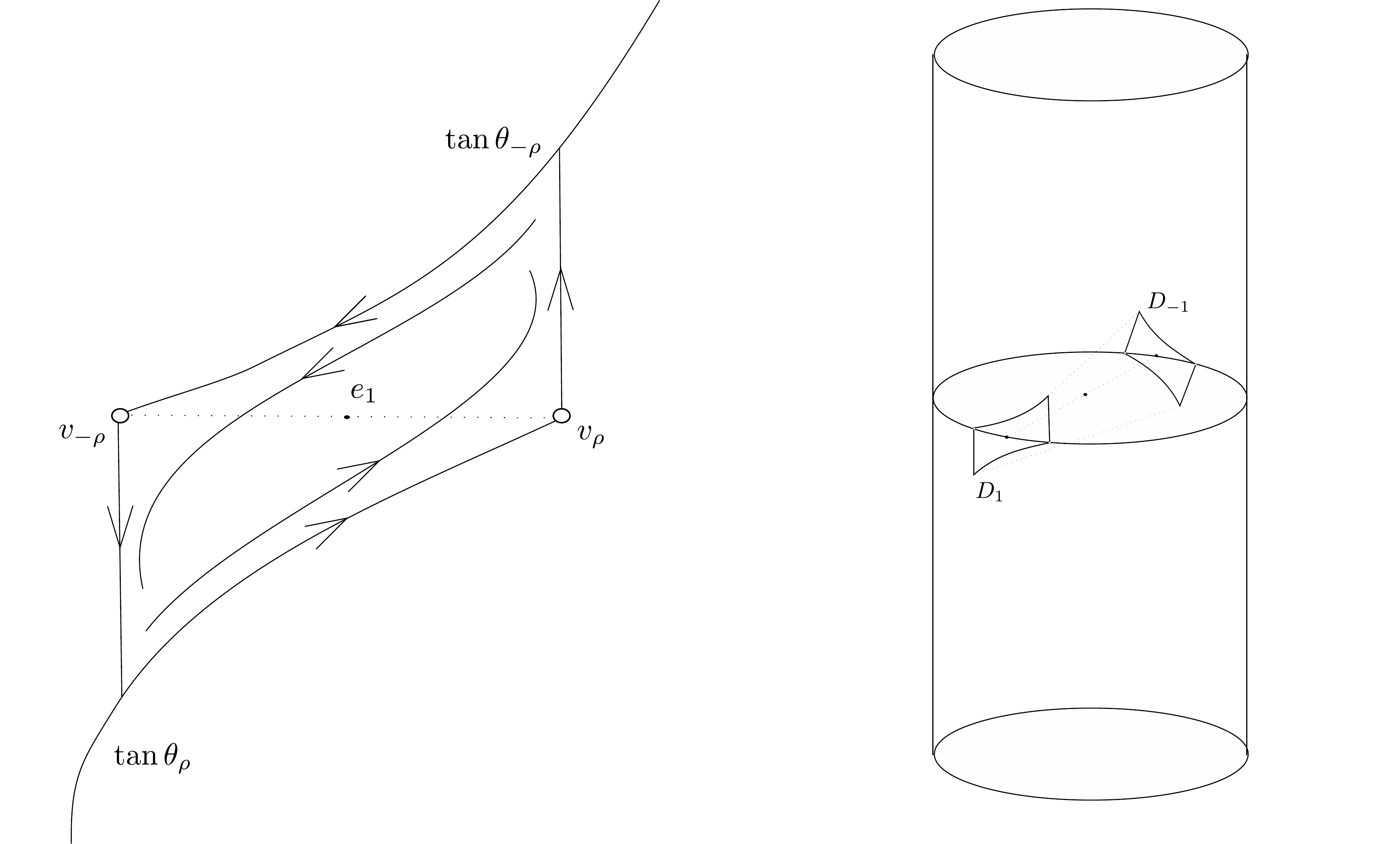}
	\end{center}
	\par
	\caption{The control sets of $\Sigma_{G/(Z_H)_0}$ associated with $\Gamma=\{\pm1\}$.}
	\label{b} 
\end{figure}

By Theorem \ref{hyper} we have that the control sets with nonempty interior of $\Sigma_G$ are given by $\pi^{-1}(D_1)$ and $\pi^{-1}(D_{-1})$.

\begin{remark}
	It is not hard to see that there are control sets around the points $(e_2, 0)$ and $(-e_2, 0)$ who are still related by the map $f_{-1}$. That shows that the induced control system on $\Sigma_{G/(Z_H)_0}$ can have more control sets than the ones given by $D_{\gamma}$, $\gamma\in\Gamma$.
\end{remark}



\end{document}